\newcommand{\SOL}{\mathbf{SOL}} 
\newcommand{\N}{{\mathbb N}}
\newcommand{\Z}{{\mathbb Z}} 
\newcommand{\R}{{\mathbb R}} 
\newcommand{\C}{{\mathbb C}}
\newcommand{\cR}{\mathcal{R}} 
\newcommand{\bF}{\mathbf{F}} 
\newif \ifskip
\newif \ifmargin
\newtheorem{theorem}{Theorem}[section]
\newtheorem{corollary}[theorem]{Corollary}
\newtheorem{lemma}[theorem]{Lemma}
\newtheorem{claim}[theorem]{Claim}
\newtheorem{proposition}[theorem]{Proposition}
\newtheorem{definition}[theorem]{Definition}
\newtheorem{example}[theorem]{Example}
\newtheorem{remark}[theorem]{Remark}
\begin{document}

\title{On the Location of Roots of Graph Polynomials}

\author[J. Makowsky]{Johann A. Makowsky}
\address{Computer Science Department, Technion--IIT, Haifa, Israel}
\email{janos@.cs.technion.ac.il}

\author[E. Ravve]{Elena V. Ravve}
\address{Software Engineering Department\\ORT-Braude College, Karmiel, Israel}
\email{cselena@braude.ac.il}

\author[N. Blanchard]{Nicolas K. Blanchard}
\address{\'Ecole Normale Sup\'erieure, Paris, France}
\email{koliaza@gmail.com}

\begin{abstract}
Roots of graph polynomials such as the characteristic polynomial, the chromatic polynomial,
the matching polynomial, and many others are widely studied. In this paper we examine
to what extent the location of these roots reflects the graph theoretic properties of
the underlying graph.
\end{abstract}

\maketitle
Version of \today
\tableofcontents
\section{Introduction}
\label{se:intro}
\newif
\iferdos
\erdosfalse
A graph $G=(V(G), E(G))$ is given by the set of vertices $V(G)$ and a symmetric edge-relation $E(G)$.
We denote by $n(G)$ the number of vertices, and by $m(G)$ the number of edges.
$k(G)$ denotes the number of connected components of $G$.
We denote the class of finite graphs by $\mathcal{G}$.

Graph polynomials are graph invariants with values in a polynomial ring, usually $\Z[X_1, \ldots , X_r]$.
Let $P(G;X)$ be a graph polynomial.
A graph $G$ is $P$-unique if for all graphs $G'$ the identity of $P(G;X)$ and $P(G';X)$ implies that
$G$ is isomorphic to $G'$.
As a graph invariant $P(G;X)$ can be used to check whether two graphs are not isomorphic.
For $P$-unique graphs $G$ and $G'$ the polynomial $P(G;X)$ can also be used to check whether they are isomorphic.
One usually compares graph polynomials by their {\em distinctive power}.

With our definition of graph polynomials there are too many graph polynomials.
Traditionally, graph polynomials studied in the literature are definable in some logical formalisms.
However, in this paper
we only assume that our univariate graph polynomials are of the form
$$
P(G;X) = \sum_{i_1, \ldots, i_r=0}^{s(G)} h_{i_1, \ldots, i_r}(G) X_1^{i_1}\cdot \ldots \cdot X_r^{i_r}
$$
where $s(G)$ is a graph parameter with non-negative integers as its values,
and $h_{i_1, \ldots, i_r}(G): i_1, \ldots, i_r \leq s(G)$ are integer valued graph parameters.
All graph polynomials in the literature are of the above form\footnote{
In \cite{ar:MakowskyTARSKI,ar:AverbouchGodlinMakowsky10,ar:KotekMakowskyZilber11,ar:GodlinKatzMakowsky12} 
the class of graph polynomials definable in Second Order Logic
$\SOL$
is studied, which imposes that $s(G)$ and $h_i(G)=h(G;i)$ are definable in $\SOL$, 
which is stronger restriction. 
}.
The logical formalism is not needed for the results in this paper, and introducing it here would only make the
paper less readable. Nevertheless, we shall indicate for the logically minded where the definability requirements
can be added without changing the results.

\subsection{Equivalence of graph polynomials}
Two graphs $G_1$ and $G_2$ are called {\em similar} if they have the same number of vertices, edges and connected components.
Two graph polynomials $P(G; X_1, \ldots X_r)$ and $Q(G;Y_1, \ldots, Y_s)$  are 
{\em equivalent in distinctive power (d.p-equivalent)} 
if for every two similar graphs $G_1$ and $G_2$
\begin{gather}
P(G_1,X_1, \ldots X_r) =P(G_2,X_1, \ldots X_r) \mbox{ iff }
Q(G_1,Y_1, \ldots Y_s) =Q(G_2,Y_1, \ldots Y_s).
\notag
\end{gather}

For a ring $\cR$
let $\cR^{\infty}$ denote the set of finite sequences of elements of $\cR$.
For a graph polynomial $P(G;X)$ we denote by $cP(G) \in \Z^{\infty}$ the sequence of coefficients of $P(G;X)$.
In Section \ref{se:equiv} we will prove the following theorem and some variations thereof:

\begin{proposition}
\label{prop:equiv-0}
Two graph polynomials $P(G; X_1, \ldots X_r)$ and $Q(G;Y_1, \ldots, Y_s)$  are
d.p-equivalent) 
iff
there are two functions $F_1, F_2: \Z^{\infty} \rightarrow \Z^{\infty}$ such that for every graph $G$
\begin{gather}
F_1(n(G), m(G), k(G), cP(G)) = cQ(G) \mbox{ and } \notag \\
F_2(n(G), m(G), k(G), cQ(G)) = cP(G) \notag
\end{gather}
\end{proposition}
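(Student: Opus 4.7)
The plan is to prove both directions, with the forward direction being essentially a routine unpacking and the reverse direction hinging on a well-definedness argument for the constructed functions $F_1, F_2$.

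For the easy direction, I would assume that such $F_1, F_2$ exist and take two similar graphs $G_1, G_2$. Since $P(G_1;\bar X) = P(G_2;\bar X)$ amounts to $cP(G_1) = cP(G_2)$ (a polynomial is determined by its coefficient sequence), and since $G_1, G_2$ are similar means $n(G_1)=n(G_2)$, $m(G_1)=m(G_2)$, $k(G_1)=k(G_2)$, applying $F_1$ to the common input yields $cQ(G_1) = cQ(G_2)$, i.e.\ $Q(G_1;\bar Y) = Q(G_2;\bar Y)$. The symmetric argument via $F_2$ gives the converse implication, so $P$ and $Q$ are d.p-equivalent.

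For the reverse direction, assume d.p-equivalence and define $F_1$ as follows. On a quadruple $(n,m,k,c)\in\Z^\infty$, set $F_1(n,m,k,c) := cQ(G)$ whenever there exists a graph $G$ with $n(G)=n$, $m(G)=m$, $k(G)=k$, and $cP(G)=c$; otherwise set $F_1$ to some fixed value (say the empty sequence). The main step, and the only place where something must be checked, is that this definition does not depend on the choice of witnessing $G$. If $G_1$ and $G_2$ both witness the same quadruple, then by construction they are similar and satisfy $cP(G_1)=cP(G_2)$, hence $P(G_1;\bar X)=P(G_2;\bar X)$. D.p-equivalence then forces $Q(G_1;\bar Y)=Q(G_2;\bar Y)$, i.e.\ $cQ(G_1)=cQ(G_2)$, giving well-definedness. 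The function $F_2$ is constructed symmetrically, swapping the roles of $P$ and $Q$.

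Finally, one verifies directly from the definitions that $F_1(n(G),m(G),k(G),cP(G))=cQ(G)$ and $F_2(n(G),m(G),k(G),cQ(G))=cP(G)$ for every graph $G$, since $G$ itself witnesses the relevant quadruple. The only conceptual obstacle is recognizing that the triple $(n,m,k)$ must be part of the input so that graphs in different similarity classes are never conflated by $F_1$; once that is in place, d.p-equivalence is exactly the statement that makes the assignment well-defined on each similarity class.
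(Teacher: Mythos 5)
Your proof is correct and follows essentially the same route as the paper: the forward direction is a direct unpacking, and the reverse direction builds $F_1,F_2$ on witnessed quadruples with an arbitrary default elsewhere, using d.p-equivalence for well-definedness on each similarity class. If anything, you are slightly more careful than the paper's own argument in making explicit that the triple $(n(G),m(G),k(G))$ must be part of the input to keep non-similar graphs from being conflated.
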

Proposition \ref{prop:equiv-0} shows that our definition of equivalence of graph polynomials
is mathematically equivalent to the definition proposed in
\cite{ar:MerinoNoble2009}.

\ifskip
\else
There are (possibly too) many equivalent graph polynomials. For example, let
$f: \N \rightarrow \N$ and
$g: \N \rightarrow \N$  be two injective functions and  let $P(G;X) = \sum_i a_i(G) X^i$ a graph polynomial. Then 
$Q(G;X) = \sum_i a_{f(i)(G)} X^{g(i)}$ is also a graph polynomial and $Q(G;X)$ is equivalent to $P(G;X)$.
\fi

\subsection{Reducibility using similarity}
In the literature one often wants to say that two graph polynomials are {\em almost the same}.
For example the various versions of the Tutte polynomial are said to be the same {\em up to a prefactor},
\cite{ar:Sokal2005a}, and the same holds for the various versions of the matching polynomial, \cite{bk:LovaszPlummer86}.
We propose a definition which makes this precise.
For this purpose we introduce the notion of {\em similarity functions}, defined in detail
in Section \ref{se:sim}, which captures the notion of {\em prefactor} as it is used in the literature.
A graph parameter is a {\em similarity function} if it is invariant under graph similarity.

\ifskip
\else
They are inductively defined and are invariant under graph similarity.
Also in Section \ref{se:sim} we define precisely how to obtain univariate graph polynomials from
multivariate graph polynomials by
{\em substitutions of variables by similarity polynomials}.
\fi 
Let 
$P(G; \bar{Y})$  and
$Q(G; \bar{X})$ 
be two multivariate graph polynomials with coefficients in a ring $\cR$.
We say that $P(G; \bar{X})$ is {\em prefactor reducible to} $Q(G; \bar{X})$ 
and we write 
$$
P(G; \bar{Y}) \preceq_{prefactor} Q(G; \bar{X})
$$ 
if there are similarity functions $f(G; \bar{X})$ and $g_1(G; \bar{X}), \ldots, g_r(G; \bar{X})$
such that
$$
P(G; \bar{Y}) = f(G; \bar{X}) \cdot Q(G; g_1(G; \bar{Y}), \ldots , g_r(G; \bar{Y}) ).
$$
$P(G; \bar{X})$  and
$Q(G; \bar{X})$ 
are {\em prefactor equivalent} if the relationship holds in both directions.
It follows that if
$P(G; \bar{X})$  and
$Q(G; \bar{X})$ 
are prefactor equivalent then they are d.p.-equivalent.

\subsection{Syntactic vs semantic properties of graph polynomials}
The notion of (semantic) equivalence of graph polynomials evolved very slowly, mostly in implicit arguments,
and is captured by our notion of d.p.-equivalence.
Originally, a graph polynomial such as the chromatic or characteristic polynomial
had a {\em unique} definition which both determined its algebraic presentation and its semantic content.
The need to spell out semantic equivalence emerged when the various forms of the Tutte polynomial had to be compared.
As was to be expected, some of the presentations of the Tutte polynomial had more convenient properties than other,
and some of the properties of one form got completely lost when passing to another semantically equivalent form.
Let us make this clearer via examples:
\begin{enumerate}
\item
The property that a graph polynomial $P(G;X)$ is 
{\em monic}\footnote{A univariate polynomial is monic if the leading coefficient equals $1$.} 
for each graph $G$ has no semantic content, because
multiplying each coefficient by a fixed integer gives an equivalent graph polynomial.
\item
Similarly, proving that the leading coefficient of $P(G;X)$ equals the number of vertices of $G$ 
is semantically meaningless, for the same reason.
However, proving that two graphs $G_1, G_2$ with $P(G_1,X) = P(G_2,X)$ have the same number of vertices is semantically meaningful.
\item
In similar vain, the classical result that the characteristic polynomial of a tree equals the 
(acyclic) matching polynomial of the same tree,
is a syntactic coincidence, or reflects a clever choice in the definition of the acyclic matching polynomial, but it is
semantically speaking meaningless. 
The semantic content of this theorem says that if we restrict our graphs to trees, then the characteristic and the
matching polynomials (in all its versions) have the same distinctive power on trees of the same size.
\end{enumerate}

\subsection{Roots of graph polynomials}
The literature on graph polynomials mostly got its inspiration from the successes in studying the  chromatic polynomial
and its many generalizations and the
characteristic polynomial of graphs. 
In both cases the roots of graph polynomials are given much attention and are meaningful when these polynomials model physical reality.

A complex number $z \in \C$ is a root  of a univariate graph polynomial $P(G;X)$
if there is a graph $G$ such that $P(G;z)=0$.
It is customary to study the
location of the roots of univariate  graph polynomials. 
Prominent examples, besides
the chromatic polynomial, the matching polynomial and the
characteristic polynomial, are the independence polynomial, the domination polynomial and the vertex cover polynomial.

For a fixed graph polynomial $P(G;X)$ typical statements about roots are:
\begin{enumerate}
\item
For every $G$ the roots of $P(G;X)$ are real.
This is true for the characteristic and the matching polynomial \cite{bk:CvetkovicDoobSachs1995,bk:LovaszPlummer86}.
\iferdos
\item
Assuming that all roots of $P(G;X)$ are real,
the (second) largest root has an  interesting combinatorial interpretation.
This is true for the characteristic polynomial 
\cite[Chapter 4]{bk:BrouwerHaemers2012}.
\item
The multiplicity of a certain value $a$ as a root of $P(G:X)$ has an interesting interpretation.
For example the multiplicity of $0$ as a root of the Laplacian polynomial is the number
of connected components of $G$, see
\cite[Chapter 1.3.7]{bk:BrouwerHaemers2012}.
\else \fi 
\item
For every $G$ all real roots of $P(G;X)$ are positive (negative)
or the only real root is $0$.
The real roots are positive in the case of the chromatic polynomial and the clique polynomial,
and negative for the independence polynomial 
\cite{bk:DongKohTeo2005,ar:HoedeLi94,ar:BrownHickmanNowakowski2004,ar:GoldwurmSantini2000,phd:Hoshino}.
\item
For every $G$ the roots of $P(G;X)$ are contained in a disk of radius
$\rho(d(G))$ where $d(G)$ is the maximal degree of the vertices of $G$.
This is the case for the chromatic polynomial
\cite{bk:DongKohTeo2005,ar:Sokal01}.
\item
For every $G$ the roots of $P(G;X)$ are contained in a disk of constant radius.
This is the case for the edge-cover polynomial \cite{ar:CsikvariOboudi2011} 
\item
The roots of $P(G;X)$ are dense in the complex plane.
This is again true for the chromatic polynomial, the dominating polynomial and the independence
polynomial \cite{bk:DongKohTeo2005,ar:Sokal04,ar:BrownHickmanNowakowski2004,phd:Hoshino}.
\end{enumerate}
In  Section \ref{se:catalog}
we give a 
more detailed discussion
of graph polynomials for which the location of their roots was studied in
the literature.

\subsection{Main results}
In this paper we address the question on how the particular location of the roots of
a univariate graph polynomial behaves under d.p-equivalence and prefactor equivalence.
Our main results, proved in Section \ref{se:main} are the following {\em modification theorems},
so called, because they show how to modify the location of the roots of
a graph polynomial within its equivalence class.

\begin{itemize}
\item
{\bf Theorems \ref{th:main1} and \ref{th:main1a}}:
For every univariate graph polynomial 
$P(G;X) = \sum_{i=0}^{s(G)} h_i(G) X^i$
where  $s(G)$ and $h_i(G), i=0, \ldots s(G)$
are graph parameters with values in $\N$, 
there exists a 
univariate graph polynomials $Q_1(G;X)$, 
prefactor equivalent to $P(G;X)$ such that
for every $G$ all real roots of $Q_1(G;X)$ are positive (negative)
or the only real root is $0$.
\item
{\bf Theorems \ref{th:main2}}:
Let $s(G)$ be a similarity function.
For every univariate graph polynomial with integer (real) coefficients 
$P(G;X) = \sum_{i=0}^{i= s(G)} h_i(G) X_i$
\\
there is a d.p.-equivalent graph polynomial 
$Q(G;X) = \sum_{i=0}^{i= s(G)} H_i(G) X_i$
with integer (real) coefficients such that all the roots of $Q(G;X)$ are real.
\item
{\bf Theorems \ref{th:main3} and \ref{th:main3a}}:
For every univariate graph polynomial $P(G;X)$ there exist 
univariate graph polynomials $Q_2(G;X)$ prefactor equivalent to
$P(G;X)$ such that
$Q_2(G;X)$ has only countably many roots, and the roots are dense in the complex plane.
If we want to have
all roots real and dense in $\R$, we have to require d.p.-equivalence.
\item
{\bf Theorem \ref{th:main4} and Corollary \ref{cor:realbounded}}:
For every univariate graph polynomial $P(G;X)$ there exist 
univariate graph polynomials $Q_3(G;X)$ prefactor equivalent to
$P(G;X)$ such that
for every $G$ the roots of $Q_3(G;X)$ are contained in a disk of constant radius.
If we want to have all roots real and bounded in $\R$, we have to require d.p.-equivalence.
\end{itemize}
We will discuss in Section \ref{se:conclu}
what kind of restrictions one might impose on graph polynomials such as to make the
location of the roots more meaningful.

\subsubsection*{Acknowledgments}
We would like to thank
I. Averbouch,
P. Csikv{\'a}ri,
J. Ellis-Monaghan,
P. Komj{\'a}th, 
T. Kotek,
 N. Labai and
A. Shpilka
for encouragement and valuable discussions.
A preliminary extended abstract and poster was presented at the Paul Erd\"os Centennial Conference
in Budapest \cite{pr:MakowskyRavveErdos}.

\section{Equivalence of graph polynomials}
\label{se:equiv}
The results of this sections were first used in the lecture notes \cite{up:Lecture-11}
by the first two authors in 2009.

Recall that two graphs $G_1, G_2$ are {\em similar} if
$n(G_1) =n(G_2),
m(G_1) =m(G_2)$ and $k(G_1) =k(G_2)$.

\subsection{Distinctive power on similar graphs}

\begin{definition}
Let $P$ and $Q$ be two graph polynomials.
\begin{enumerate}
\item
{\em $P$ is more distinctive as $Q$, 
$Q \preceq_{d.p} P$
} if for all pairs of similar graphs
$G_1, G_2$ with $Q(G_1) = Q(G_2)$ we also have $P(G_1)=P(G_2)$.
\item
$P$ and $Q$ are {\em d.p.-equivalent or equally distinctive, $P \sim_{d.p} Q$}, 
if both
$Q \preceq_{d.p} P$ and $P \preceq_{d.p} Q$ hold.
\end{enumerate}
\end{definition}

\subsection{Examples of d.p.-equivalent graph polynomials}
\label{se:equiv-ex}
\begin{example}
\label{ex:matching}
Let $m_k(G)$ denote the number independent sets of edges of size $k$.
There are two versions of the univariate matching polynomial, cf. \cite{bk:LovaszPlummer86}:
The {\em matching defect polynomial} (or {\em acyclic polynomial})
$$
\mu(G,\lambda)= \sum_k^{\frac{n}{2}} (-1)^k m_k(G) \lambda^{n-2k},
$$
and the
\em{matching generating polynomial}
$$
g(G,\lambda)= \sum_k^n m_k(G) \lambda^k
$$
The relationship between two is given by
$$
\mu(G,\lambda)=\sum_k^\frac{n}{2}(-1)^k m_k(G) \lambda^{n-2k} =
\lambda^{n}\sum_k^\frac{n}{2}(-1)^k m_k(G) \lambda^{-2k} =
$$
and
$$
=\lambda^{n}\sum_k^\frac{n}{2} m_k(G) ((-1)\cdot \lambda^{-2})^{k}
=\lambda^{n}\sum_k^\frac{n}{2} m_k(G) (-\lambda^{-2})^{k} =
\lambda^ng(G,(-\lambda^{-2}))
$$
It follows that $g$ and $m$ are equally distinctive with respect to similar graphs.
However,
$g(G;X)$ is invariant under addition or removal of isolated vertices,
whereas $\mu(G;X)$ counts them.
\end{example}

\begin{example}
\label{ex:coefficients}
Let $P(G,X)$ be a univariate graph polynomial with integer coefficients and
$$
P(G,X) = \sum_{i=0}^{d(G)} a_i(G) X^i = \sum_{i=0}^{d(G)} b_i(G) X_{(i)}
=
\sum_{i=0}^{d(G)} c_i(G) {X \choose i}
=
\prod_i^{d(G)} (X - z_i)
$$
where $X_{(i)} = X(X_1)\cdot \ldots \cdot (X-i+1)$ is the falling factorial function.
We denote by  $aP(G)=(a_0(G), a_1(G), \ldots, a_{d(G)}(G))$,
$bP(G) =(b_0(G), b_1(G), \ldots, b_{d(G)}(G))$ 
and
$cP(G) =(c_0(G), c_1(G), \ldots, c_{d(G)}(G))$ 
the coefficients of these polynomial presentations
and by $zP(G) = (z_1, \ldots , z_{d(G)}$ the roots of these polynomials with their multiplicities.
We note that the four presentations of $P(G;X)$
are all d.p.-equivalent.
\end{example}

\begin{example}
Let $G=(V(G), E(G))$ be a loop-less graph without multiple edges.
Let $A_G$ be the adjacency matrix of $G$, $D_G$ the diagonal matrix with $(D_G)_{i,i} =d(i)$, the degree of the vertex $i$,
and $L_G = D_G -A_G$.
In spectral graph theory two graph polynomials are considered, the {\em characteristic polynomial of $G$},
here denoted by $P_A(G;X) = \det (X\cdot \mathbf{I}- A_G)$, and the {\em Laplacian polynomial}, 
here denoted by $P_L(G;X) = \det (X\cdot \mathbf{I}- L_G)$. Here $\mathbf{I}$ denotes the unit element in
the corresponding matrix ring.
%
%
%
%
%
$G$ and $H$ in Figure \ref{fig1}  are similar.
We have 
$$P_A(G;X)=P_A(H;X)= (X-1)(X+1)^2(X^3 -X^2 -5X +1),$$ but $G$ has two spanning trees, and $H$ has six.
Therefore, $P_L(G;X) \neq P_L(H;X)$, as one can compute the number of spanning trees from $P_L(G;X)$.
For more details, cf. \cite[Exercise 1.9]{bk:BrouwerHaemers2012}.

\begin{figure}[h]
\begin{center}
\includegraphics[width=.15\linewidth]{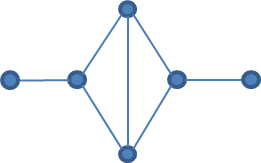} 
\hspace{2cm}
\includegraphics[width=.07\linewidth]{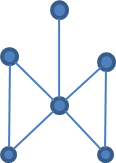} 
\\
$G$ \hspace{4cm} $H$
\end{center}
\caption{}
\label{fig1}
\end{figure}

On the other hand, $G'$ and $H'$ in Figure \ref{fig2} are similar, but
$G'$ is not bipartite, whereas, $H'$ is.
\begin{figure}[h]
\begin{center}
\includegraphics[width=.15\linewidth]{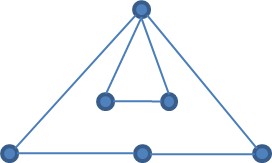} 
\hspace{2cm}
\includegraphics[width=.15\linewidth]{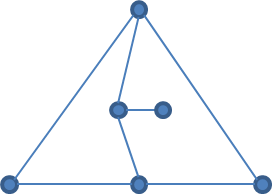} 
\\
$G'$ \hspace{4cm} $H'$
\caption{}
\label{fig2}
\end{center}
\end{figure}
%
%
%
%
%
Hence $P_A(H;X) \neq P_L(G',X)$, but
$P_L(H;X)=P_L(G';X)$.
See, \cite[Lemma 14.4.3]{bk:BrouwerHaemers2012}.

{\bf Conclusion:} The characteristic polynomial and the Laplacian polynomial
are d.p.-incomparable.
However, if restricted to $k$-regular graphs, they are d.p.-eqivalent, cf. \cite{bk:BrouwerHaemers2012}.
\end{example}

\subsection{Characterizing d.p.-equivalence}
\begin{proposition}
\label{prop:equiv}
Let $P$ and $Q$ be two graph polynomials with coefficients 
in a ring $\cR$ which contains the natural numbers $\N$, and denote by
$cP$ and $cQ$ respectively the sequence of their coefficients.

The following are equivalent:
\begin{enumerate}
\item 
$Q \preceq_{d.p} P$; 
\item 
there is a function $F_1: \cR^{\infty} \rightarrow \cR^{\infty}$ such for every graph $G$
\begin{gather}
F_1(n(G), m(G), k(G), cP(G)) = cQ(G).  \notag
\end{gather}
\end{enumerate}
\ifskip
\else
\item 
there is a function $F_2: \Z^{\infty} \rightarrow \Z^{\infty}$ such for every graph $G$
\begin{gather}
F_2(bP(G)) = bQ(G)  \notag
\end{gather}
\item 
there is a function $F_3: \Z^{\infty} \rightarrow \Z^{\infty}$ such for every graph $G$
\begin{gather}
F_3(zP(G)) = zQ(G)  \notag
\end{gather}
\end{enumerate}
The corresponding also holds for
$Q \preceq_{d.p} P$ 
and
$$F(n(G), m(G), k(G), aP(G)) = aQ(G).$$
\fi 
\end{proposition}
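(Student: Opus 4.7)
The proof has two directions, and the work is entirely in turning a semantic condition into a syntactic function. I would keep in mind throughout that, because graph polynomials are presented by their coefficient sequences, two polynomials $P(G_1;\bar X)$ and $P(G_2;\bar X)$ are equal if and only if $cP(G_1)=cP(G_2)$.

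For the implication (ii)$\Rightarrow$(i), suppose the function $F_1$ exists and let $G_1,G_2$ be similar graphs with $P(G_1;\bar X)=P(G_2;\bar X)$. By similarity we have the equalities $n(G_1)=n(G_2)$, $m(G_1)=m(G_2)$, $k(G_1)=k(G_2)$, and by the identity of polynomials $cP(G_1)=cP(G_2)$. Feeding these four data into $F_1$ on both sides gives $cQ(G_1)=cQ(G_2)$, hence $Q(G_1;\bar Y)=Q(G_2;\bar Y)$. This is exactly $Q\preceq_{d.p}P$.

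For the implication (i)$\Rightarrow$(ii), I would simply \emph{define} $F_1$ on the set $S\subseteq\cR^\infty$ of those tuples of the form $(n(G),m(G),k(G),cP(G))$ that are actually realized by some graph $G$, by setting
\[
F_1\bigl(n(G),m(G),k(G),cP(G)\bigr)\;:=\;cQ(G).
\]
The only thing to check is that this assignment is well-defined: if two graphs $G_1,G_2$ produce the same tuple, then they are similar and satisfy $cP(G_1)=cP(G_2)$, i.e.\ $P(G_1;\bar X)=P(G_2;\bar X)$; the hypothesis $Q\preceq_{d.p}P$ then forces $Q(G_1;\bar Y)=Q(G_2;\bar Y)$, and hence $cQ(G_1)=cQ(G_2)$. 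Finally extend $F_1$ to all of $\cR^\infty$ in an arbitrary way (say as the constant zero sequence off $S$); the extension value plays no role since only inputs in $S$ are ever evaluated in the required identity.

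The only point that might be called an obstacle is the well-definedness check in the second direction, and that check is a direct unpacking of the definitions of similarity and of $\preceq_{d.p}$. Note that the argument uses no property of $\cR$ beyond the fact that polynomials over $\cR$ are determined by their coefficient sequences, which is why the same scheme will give the coefficient-based variants for $bP,cP,aP$ listed in the skipped part of the statement; for the root-sequence variant $zP$ the same proof works provided one works up to multiplicity and a chosen ordering so that $zP(G)$ is genuinely well-defined as an element of $\cR^\infty$.
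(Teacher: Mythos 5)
Your proof is correct and follows essentially the same route as the paper: the forward direction defines $F_1$ on the realized tuples by a well-definedness check on the fibres and extends it arbitrarily, and the converse is a direct application of the function to equal inputs. If anything, your version is slightly more careful than the paper's, since you explicitly include the similarity data $(n(G),m(G),k(G))$ in the domain of $F_1$, whereas the paper's fibres $Q^{-1}(s)$ are indexed by the coefficient sequence alone even though $\preceq_{d.p}$ only constrains pairs of \emph{similar} graphs.
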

\begin{proof}
We prove the equivalence of (i) and (ii).
The equivalences follow from the fact that the coefficients and the  roots with their multiplicities
determine a univariate polynomial uniquely.
The proof for
$Q \preceq_{d.p} P$ is analogous.

(i) $\rightarrow$ (ii):\\
Let $S$ be a set of finite graphs and
$s \in \Z^{\infty}$. 
For a graph polynomial $P$  we define:
\begin{gather}
P[S] = \{ s \in \Z^{\infty}: aP(G)=s \mbox{ for some } G \in S\} \notag\\
P^{-1}(s)= \{G : aP(G)=s\}. \notag
\end{gather}
Now assume $P(G,X) \preceq_{d.p.} Q(G,X)$. 
\\
If $Q^{-1}(s) \neq \emptyset$, then for every $G_1, G_2 \in Q^{-1}(s)$
we have 
$cQ(G_1) = cQ(G_2)$, and therefore
$cP(G_1) = cP(G_2)$. 
Hence $P[Q^{-1}(s)] = \{t_s\}$ for some $t_s \in \Z^{\infty}$.
Now we define 
\begin{gather}
F_{P,Q}(s)=
\begin{cases}
t_s & Q^{-1}(s) \neq \emptyset\\
s & \mbox{else}
\end{cases}
\notag
\end{gather}

(ii) $\rightarrow$ (i): \\
Assume there is a function
$F: \Z^{\infty} \rightarrow \Z^{\infty}$ such that 
for all graphs $G$ we have 
$ F(aQ(G)) = aP(G)$.

Now let $G_1, G_2$ be similar graphs such that 
$Q(G_1) = Q(G_2)$.

Clearly we have $aQ(G_1) = aQ(G_2)$.
Hence
$F(aQ(G_1)) = F(aQ(G_2))$.

Since for all $G$ we have $ F(aQ(G)) = aP(G)$,
we get $aP(G_1)= aP(G_2)$ and therefore 
$P(G_1)= P(G_2)$. 
\end{proof}
\begin{remark}
\begin{enumerate}
\item
As we have seen in Example \ref{ex:coefficients},
instead of the coefficients $cP$ and $cQ$ one could consider any other sequence
of elements which characterize the coefficients, or in the univariate case, also the
sequence of the roots of the polynomials.
\item
The theorem also holds in a restricted version, where all the graphs considered
have a certain graph property $\mathcal{P}$.
\end{enumerate}
\end{remark}

Using Proposition \ref{prop:equiv} it is now easy to construct many strongly d.p.-equivalent polynomials:

\begin{corollary}
\label{cor:many}
Let $z: \C \rightarrow \C$ be an injective complex function.
Let $G$ be a graph
and let $P(G;X)$ be a univariate graph polynomial with roots
$\theta_i(G): i \leq d(G)$, i.e., 
$P(G;X)= \prod_{i \leq d(G)} (X- \theta_i(G))$.
Let $P_z(G;X)= \prod_{i \leq d(G)} (X- z(\theta_i(G)))$.
Then $P(G;X)$ and $P_z(G;X)$ are d.p.-equivalent.
\end{corollary}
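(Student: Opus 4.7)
The plan is to verify the two inclusions of $\sim_{d.p.}$ directly. The pivot of the whole argument is the elementary observation that a monic univariate polynomial over $\C$ is determined by, and determines, its multiset of roots (counted with multiplicities); I would combine this with the injectivity of $z$, which is exactly what lets one go backwards from $P_z$ to $P$.

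First I would check $P \preceq_{d.p.} P_z$. Given similar graphs $G_1, G_2$ with $P(G_1;X) = P(G_2;X)$, the degrees $d(G_1), d(G_2)$ agree and the multisets $\{\theta_i(G_1)\}$ and $\{\theta_i(G_2)\}$ coincide in $\C$. Applying $z$ pointwise to these multisets produces identical multisets again (no injectivity is needed for this direction), and re-expanding $\prod_i (X - z(\theta_i(G_\ell)))$ via Vieta's formulas recovers $P_z(G_\ell;X)$, so $P_z(G_1;X) = P_z(G_2;X)$.

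The converse $P_z \preceq_{d.p.} P$ is where injectivity enters. If $P_z(G_1;X) = P_z(G_2;X)$, then the multisets $\{z(\theta_i(G_1))\}$ and $\{z(\theta_j(G_2))\}$ are equal in $\C$. Because $z$ is injective and thus a bijection onto its image, the preimage of a multiset under $z$ is unambiguous, so the multisets $\{\theta_i(G_1)\}$ and $\{\theta_j(G_2)\}$ agree as well, which yields $P(G_1;X) = P(G_2;X)$.

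There is no real obstacle here; the only point requiring slight care is to track multisets (with multiplicities) rather than sets, so that multiplicities of roots and the corresponding coefficients line up correctly under Vieta's formulas. Alternatively, one could package the argument through Proposition \ref{prop:equiv}, defining the required map $F_1$ on coefficient sequences as ``compute roots in $\C$, apply $z$, re-expand via elementary symmetric functions,'' and $F_2$ analogously using $z^{-1}$ on the image of $z$ (extended arbitrarily elsewhere); but the multiset formulation above seems cleanest.
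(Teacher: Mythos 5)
Your argument is correct and is essentially the paper's: the text simply invokes Proposition \ref{prop:equiv} together with the fact that a monic univariate polynomial and its multiset of roots determine one another, which is exactly the multiset bookkeeping (with injectivity of $z$ used for the reverse direction) that you carry out explicitly. Nothing further is needed.
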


As already mentioned in the introduction,
it is therefore reasonable to restrict the possibilities of creating graph polynomials
by imposing some restricting conditions on the representability of the graph polynomials.
But one has to be careful not to be too restrictive.
A good candidate for such a restriction is
the class of graph polynomials definable in Second Order Logic $\SOL$
studied in \cite{ar:MakowskyTARSKI,ar:AverbouchGodlinMakowsky10,ar:KotekMakowskyZilber11,ar:GodlinKatzMakowsky12}. 
However, for our discussion in this paper the precise definition of definability in $\SOL$ is not needed.

\section{Similarity function and  prefactor reductions}
\label{se:sim}
\subsection{Prefactor equivalence}
A graph parameter $f(G)$ with values in some function space $\bF$ over some ring $\cR$ 
is called a 
{\em similarity function} if for any two similar graphs $G,H$ we have that $f(G)=f(H)$.
If $\bF$ is  a subset of the set of analytic functions we speak of
{\em analytic similarity functions}.

If $\bF$ is the polynomial ring $\Z[\bar{X}]$ with
set of indeterminates $\bar{X}= (X_1, \ldots X_r)$,
we speak of {\em  similarity polynomials}.
It will be sometimes useful to allow classes of functions spaces 
which are closed under
{\em reciprocals} and {\em inverses} rather
than just similarity polynomials.  

\begin{example}
Typical examples of similarity functions
are 
\begin{enumerate}
\item
The nullity $\nu(G) = m(G)- n(G) +k(G)$ and the rank $\rho(G) =n(G)- k(G)$ of a graph $G$
are similarity polynomials with integer coefficients.
\item
Similarity polynomials can be formed inductively starting with
similarity functions $f(G)$
not involving indeterminates, and
monomials of the form $X^{g(G)}$ where $X$ is an indeterminate and 
$g(G)$ is a 
similarity function
not involving indeterminates.
One then closes under pointwise addition, subtraction, multiplication and substitution of
indeterminates $X$ by similarity polynomials.
\item
$f(G; X) = n(G)X^2$ is a similarity polynomial with integer coefficients.
Its inverse $f^{-1}(G; X) = n(G)^{-1} X^{\frac{1}{2}}$ is analytic at any point $a\in \R$ with $a \neq 0$.
Its reciprocal $\frac{1}{f(G;X)}$ is rational.
\end{enumerate}
\end{example}

In the literature one often wants to say that two graph polynomials are {\em almost the same}.
We propose a definition which makes this precise.

\begin{definition}
Let 
$P(G; Y_1, \ldots, Y_r)$  and
$Q(G; X_1, \ldots X_s)$ 
be two multivariate graph polynomials with coefficients in a ring $\cR$.
\begin{enumerate}
\item
We say that $P(G; \bar{X})$ is {\em prefactor reducible to $Q(G; \bar{X})$ 
over a set of similarity functions $\bF$},
and we write 
$$
P(G; \bar{Y}) \preceq_{prefactor}^{\bF} Q(G; \bar{X})
$$ 
if there are similarity functions $f(G; \bar{X})$ and $g_i(G; \bar{X}), i \leq r$ in $\bF$
such that
$$
P(G; \bar{Y}) = f(G; \bar{X}) \cdot Q(G; g_1(G; \bar{Y}), \ldots, g_r(G;\bar{Y}))
$$
\item
We say that $P(G; \bar{X})$ is {\em substitution reducible to} $Q(G; \bar{X})$ over $\bF$
and we write 
$$
P(G; \bar{Y}) \preceq_{subst} Q(G; \bar{X})
$$ 
if $f(G;\bar{X})=1$ for all values of $\bar{X}$.
\item
We say that
$P(G; \bar{X})$  and
$Q(G; \bar{X})$  are {\em  prefactor equivalent},
and we write
$$
P(G; \bar{X}) \sim_{prefactor} Q(G; \bar{X})
$$ 
if the relation holds in  both directions.
\item
substitution equivalence
$ P(G; \bar{X}) \sim_{subst} Q(G; \bar{X}) $ 
is defined analogously.
\end{enumerate}
\end{definition}

The following properties follow from the definitions.
\begin{proposition}
Assume we have two graph polynomials
$P(G; \bar{X})$  and
$Q(G; \bar{X})$. 
For reducibilities we have:
\begin{enumerate}
\item
$ P(G; \bar{X}) \preceq_{subst} Q(G; \bar{X}) $ implies
$ P(G; \bar{X}) \preceq_{prefactor} Q(G; \bar{X}) $.
\item
$ P(G; \bar{X}) \preceq_{prefactor} Q(G; \bar{X}) $ implies
$ P(G; \bar{X}) \preceq_{d.p.} Q(G; \bar{X}) $.
\end{enumerate}
The corresponding implications for equivalence obviously also hold.
\end{proposition}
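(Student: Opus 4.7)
Both implications are essentially unpackings of the definitions; the only real content is that a similarity function takes the same value on similar graphs, together with the trivial fact that a polynomial identity in $\bar{X}$ is preserved under arbitrary substitutions into its indeterminates.

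For (i), I would simply observe that substitution reducibility is literally defined as the special case of prefactor reducibility in which the outer factor $f(G;\bar{X})$ is identically $1$. Since the constant function $1$ is trivially a similarity function (it does not depend on $G$ at all), any tuple $(g_1, \ldots, g_r)$ witnessing $P \preceq_{subst} Q$, together with $f \equiv 1$, witnesses $P \preceq_{prefactor} Q$.

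For (ii), I would suppose $P(G;\bar{Y}) \preceq_{prefactor} Q(G;\bar{X})$, i.e.\
\[
P(G; \bar{Y}) = f(G; \bar{X}) \cdot Q\bigl(G; g_1(G; \bar{Y}), \ldots, g_r(G; \bar{Y})\bigr)
\]
for some similarity functions $f, g_1, \ldots, g_r$. To verify $P \preceq_{d.p.} Q$ I would take similar graphs $G_1, G_2$ with $Q(G_1;\bar{X}) = Q(G_2;\bar{X})$ as polynomials in $\bar{X}$, and show that $P(G_1;\bar{Y}) = P(G_2;\bar{Y})$. By definition of similarity function, the similarity of $G_1$ and $G_2$ yields $f(G_1;\bar{X}) = f(G_2;\bar{X})$ and $g_i(G_1;\bar{Y}) = g_i(G_2;\bar{Y})$ for each $i$. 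Since the polynomial identity $Q(G_1;\bar{X}) = Q(G_2;\bar{X})$ is preserved by substituting any elements of $\cR$ into $\bar{X}$, substituting $X_i := g_i(G_1;\bar{Y}) = g_i(G_2;\bar{Y})$ gives identical right-hand sides in the two instances of the prefactor identity, hence $P(G_1;\bar{Y}) = P(G_2;\bar{Y})$.

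The implications for the equivalence relations $\sim_{subst}$, $\sim_{prefactor}$ and $\sim_{d.p.}$ then follow by applying the corresponding one-sided implication in each direction. I do not anticipate a genuine obstacle; the only point that requires care is notational, namely keeping the variable tuples $\bar{Y}$ (of length $r$) and $\bar{X}$ (of length $s$) straight, and remembering that the $g_i$'s map tuples of length $r$ to tuples of length $s$ so that the substitution $Q(G; g_1(G;\bar{Y}), \ldots, g_r(G;\bar{Y}))$ is well-typed.
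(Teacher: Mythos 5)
Your proposal is correct and matches the paper's intent: the paper states this proposition without proof, noting only that it "follows from the definitions," and your unpacking — $f\equiv 1$ is a similarity function for (i), and similarity functions agree on similar graphs so the prefactor identity forces $P(G_1)=P(G_2)$ for (ii) — is exactly the argument being left implicit. No gaps.
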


\subsection{The classical examples}
\begin{example}[The universal Tutte polynomial]
Let $T(G;X,Y)$ be the Tutte polynomial, cf. \cite[Chapter 10]{bk:Bollobas98}.
The universal Tutte polynomial is defined by
$$
U(G;X,Y,U,V,W) = U^{k(a)G} \cdot V^{\nu(G)} \cdot W^{\rho(G)} \cdot T\left(G; \frac{UX}{W}, \frac{Y}{U}\right)
$$
$U(G;X,Y,U,V,W)$ is the most general graph polynomial satisfying the recurrence relations
of the Tutte polynomial in the sense that every other graph polynomial satisfying these recurrence
relations is a substitution instance of $U(G;X,Y,U,V,W)$.

Clearly, $U(G;X,Y,U,V,W)$ is 
prefactor equivalent to $T(G;X,Y)$ using rational similarity functions.
\end{example}

\begin{example}[The matching polynomials]
In Example \ref{ex:matching} we have already seen
the three matching polynomials:
\begin{gather}
\mu(G;X) = \sum_i (-1)^i m_i(G) X^{n(G)- 2i}
\notag \\
g(G;Y) = \sum_i m_i(G) Y^i
\notag \\
M(G; X,Y) = \sum_i m_i(G) X^i Y^{n(G)- 2i}
\notag
\end{gather}
We have
$ \mu(G;X) = X^{n(G)} \cdot g(G; -X^{-2}) $
and 
$M(G;X,Y) = Y^{n(G)} \cdot g(G; \frac{X}{Y^2})$.
Clearly, all three matching polynomials are mutually 
prefactor bi-reducible using analytic similarity functions.
\end{example}

\begin{example}
The following graph polynomials are d.p.-equivalent but incomparable by prefactor reducibility:
\begin{enumerate}
\item
$M(G;X)$ and $M(G;X)^2$;
\item
$\mu(G;X)$ and $\sum_i m_i(G){{X}\choose{i}}$.
\end{enumerate}
\end{example}
\section{Location of the roots of equivalent graph polynomials}
\label{se:main}
In this section we study the location of the roots of a graph polynomial.
In particular we are interested in the question of whether
all its roots are real, whether they are dense in $\R$ or in $\C$, or whether their absolute value is
bounded independently of the graph.
We first discuss these properties on known graph polynomials,
and then we show that up to d.p-equivalence (or even prefactor or substitution equivalence)
these properties can be forced to be true.
\subsection{Known graph polynomials and their roots}
\label{se:catalog}
\subsubsection{Characteristic polynomials of symmetric matrices}

It is a classical result of linear algebra that the characteristic polynomial
$det(X \cdot \mathbf{I} -A)$ where $A$ is a symmetric real matrix
has only real roots.

Let $G (V(G),E(G))$ be a simple graph. For $v \in V(G)$ let $d(v)$ be the degree of $v$, 
and for $e=(u,v) \in E(G)$ let $c(u,v)$ be length of the shortest proper cycle
containing $e$ if such a cycle exists, otherwise we set $c(u,v) =1$ if $(u,v) \in E(G)$
and $0$ otherwise.

We can associate various symmetric matrices with graphs:
the adjacency matrix $A_G$, where the diagonal elements are all $0$, the Laplacian $L_G$,
where the diagonal elements are give by $a_{v,v} =d(v)$,  are both 
used in the literature, cf. \cite{bk:BrouwerHaemers2012}.
The graph polynomials 
$$
P_A(G;X)= \det( X\cdot \mathbf{I} - A_G)
$$
and
$$
P_L(G;X)= \det( X\cdot \mathbf{I} - L_G)
$$
are the characteristic polynomial, respectively the Laplacian polynomial from Section \ref{se:equiv-ex}. 
All their roots are real,
and $P_A(G;X)$ and $P_L(G;X)$ are not d.p.-equivalent.

We can let our imagination also run wild.
Define for example  $C_G$ with
$$
(C_G)_{u,v} = \begin{cases}
c(u,v) & (u,v) \in E(G) \\
d(v) & u=v
\end{cases}
$$
and  define the {\em characteristic cycle polynomial $P_{cc}(G, X)$} by
$$
P_{cc}(G, X) = det(X \cdot \mathbf{I} - C_G)
$$
We know by construction that all the roots of $P_{cc}(G, X)$ are real, but we do not know whether
this is an interesting graph polynomial.
The reader can now try to construct infinitely many pairwise non-d.p.-equivalent graph polynomials 
where all the roots are real.

\subsubsection{The matching polynomials}

We have already defined the three matching polynomials,
$\mu(G;X), g(G;X)$ and $M(G;X,Y)$ in Section \ref{se:equiv-ex}.
For more background, cf. \cite{bk:LovaszPlummer86}.
The roots of $\mu(G;X)$ have interpretations in chemistry, cf. \cite{bk:Trinajstic1992,ar:Balaban93,ar:Balaban95}.

\begin{theorem}[\cite{ar:HeilmannLieb72}]
The roots of $\mu(G;X)$ and $g(G;X)$ are all real.
The roots of $\mu(G;X)$ are symmetrically placed around $0$ and the roots of $g(G;X)$
are all negative.
\end{theorem}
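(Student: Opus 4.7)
The plan is to deduce all three assertions from the reality of the roots of $\mu(G;X)$, since that is by far the deepest point; the symmetry of $\mu$'s roots and the negativity of $g$'s roots will fall out from simple algebraic manipulations.

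First I would dispose of the easy parts. Substituting $-X$ into the definition of $\mu$ gives
\begin{equation*}
\mu(G;-X)=\sum_{k}(-1)^{k}m_{k}(G)(-X)^{n-2k}=(-1)^{n}\mu(G;X),
\end{equation*}
so the multiset of roots of $\mu(G;X)$ is invariant under $\theta\mapsto-\theta$, yielding the claimed symmetry once reality is established. The identity $\mu(G;X)=X^{n}g(G;-X^{-2})$ from Example \ref{ex:matching} then shows that each nonzero real root $\theta$ of $\mu(G;X)$ corresponds bijectively to the root $-\theta^{-2}$ of $g(G;X)$, which is manifestly negative; a root $0$ of $\mu$ simply comes from the prefactor $X^{n}$ and does not contribute a root of $g$.

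For the reality of the roots of $\mu(G;X)$ I would induct on $n(G)$ using the vertex-deletion recurrence
\begin{equation*}
\mu(G;X)=X\cdot\mu(G-v;X)-\sum_{u\sim v}\mu(G-u-v;X),
\end{equation*}
which follows by splitting the matchings of $G$ according to whether they cover a fixed vertex $v$. To make the induction close, I would strengthen the hypothesis so that it asserts, for every graph $H$ smaller than $G$, both that $\mu(H;X)$ has only real simple roots and that for every vertex $w\in V(H)$ the roots of $\mu(H-w;X)$ strictly interlace those of $\mu(H;X)$. In the inductive step, let $\phi_{1}<\cdots<\phi_{n-1}$ be the roots of $\mu(G-v;X)$; by the hypothesis applied to $G-v$ and to each $u\in N(v)$, the roots $\psi_{1}^{u}<\cdots<\psi_{n-2}^{u}$ of $\mu(G-u-v;X)$ strictly interlace the $\phi_{i}$, which by counting negative factors in $\prod_{j}(\phi_{i}-\psi_{j}^{u})$ forces $\operatorname{sgn}\mu(G-u-v;\phi_{i})=(-1)^{n-1-i}$ uniformly in $u$. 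Evaluating the recurrence at $X=\phi_{i}$ then gives $\operatorname{sgn}\mu(G;\phi_{i})=(-1)^{n-i}$, and comparing with the asymptotic signs of the monic polynomial $\mu(G;X)$ at $\pm\infty$ places one real root in each of the $n$ intervals $(-\infty,\phi_{1}),(\phi_{1},\phi_{2}),\ldots,(\phi_{n-1},+\infty)$. This exhausts the degree and simultaneously re-establishes strict interlacing for $G$.

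The main obstacle is the sign analysis: one really needs strict interlacing rather than weak interlacing, so that every $\mu(G-u-v;\phi_{i})$ is nonzero and the summands cannot cancel. Building strictness into the induction from the start requires handling the degenerate configurations separately, namely an isolated $v$ (where the recurrence collapses to $\mu(G;X)=X\mu(G-v;X)$) and, when $n$ is odd, the forced root at $0$ coming from the symmetry derived in paragraph one; in both cases the reduction to a smaller graph is immediate. Once reality is secured for $\mu$, the symmetry of its roots and the negativity of the roots of $g$ follow from the identities already recorded, completing the proof.
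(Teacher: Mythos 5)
Your reduction of the easy parts is fine: the parity identity $\mu(G;-X)=(-1)^{n}\mu(G;X)$ gives the symmetry of the root multiset, and the substitution $\mu(G;X)=X^{n}g(G;-X^{-2})$ converts the paired nonzero real roots $\pm\theta$ of $\mu$ into the negative roots $-\theta^{-2}$ of $g$. The gap is in the core induction. Your strengthened hypothesis --- that $\mu(H;X)$ has only \emph{simple} real roots and that deleted-vertex roots \emph{strictly} interlace --- is false. For the star $K_{1,3}$ one has $\mu(K_{1,3};X)=X^{4}-3X^{2}=X^{2}(X^{2}-3)$, a connected graph on an even number of vertices with a double root at $0$; for $2K_{2}$ one has $\mu=(X^{2}-1)^{2}$ with double roots at $\pm 1$. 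So the degenerate configurations are not limited to the two cases you set aside (isolated $v$, and a forced root at $0$ for odd $n$): repeated roots occur at arbitrary locations, the quantities $\mu(G-u-v;\phi_{i})$ can vanish, and the uniform sign $(-1)^{n-1-i}$ can fail, so the summands over $u\sim v$ need not have a common sign and the root count in the intervals $(\phi_{i},\phi_{i+1})$ does not close. The correct interlacing statement is only the weak one, and with weak interlacing your sign argument as written does not produce $n$ real roots.

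The approach is salvageable but needs a genuinely new ingredient, and this is where your route diverges from the paper's. The classical repairs are: (a) Heilmann--Lieb's original device of introducing generic vertex/edge weights, for which the roots are simple and interlacing is strict, followed by a limiting argument as the weights tend to $1$; or (b) a careful treatment of weak interlacing via the partial-fraction expansion of $\mu(G-v;X)/\mu(G;X)$ with nonnegative coefficients. The paper instead sketches Godsil's argument, which avoids interlacing for the reality statement altogether: one observes that $\mu(F;X)=P_{A}(F;X)$ for every forest $F$, so forests have real matching roots by the spectral theorem, and then one shows that for every $G$ there is a forest $F_{G}$ (the path tree) with $\mu(G;X)$ dividing $\mu(F_{G};X)$. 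If you want to keep your inductive scheme you must either add the weighting-and-limit step or downgrade to weak interlacing and rework the sign analysis; as it stands the induction hypothesis is refuted by $K_{1,3}$.
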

\begin{proof}[Sketch of proof]
One first proves it for $\mu(G;X)$ and derives the statements for $g(G;X)$.
First one notes that on forests $F$ the characteristic polynomial of $F$ satisfies
$$
P_A(F;X) = \mu(F;X).
$$
Therefore the roots of $\mu(F;X)$ are real for forests.
Then one shows that for each graph $G$ there is a forest $F_G$
such that $\mu(G;X)$ divides $\mu(F_G;X)$.
\end{proof}

\subsubsection{The chromatic polynomial}
We first define a parametrized graph parameter $\chi(G;k)$ for natural numbers $k$
as the number of proper $k$-colorings of the graph $G$.
Birkhoff in 1912 showed that this is a polynomial in $k$ and therefore can be extended
as a polynomial $\chi(G;X)$ for complex values for $X$.
The most complete reference on the chromatic polynomial is
\cite{bk:DongKohTeo2005}.
The roots of the chromatic polynomial have interpretations in statistical mechanics.
For our discussion in this paper the following theorem summarizes what we need:

\begin{theorem}[A. Sokal]
\ 
\begin{description}
\item[\cite{ar:Sokal04}]
The roots of $\chi(G;X)$ are dense in the complex plane.
\item[\cite{ar:Sokal01}]
The absolute values of the roots of $\chi(G;X)$ are bounded by a function of the degree of $G$.
\end{description}
\end{theorem}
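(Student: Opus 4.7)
Both assertions are deep results cited from Sokal's work; the plan is to describe the overall strategy rather than to reconstruct the full arguments.

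For the density statement, the natural approach is to apply the Beraha-Kahane-Weiss theorem to a concretely computable family of graphs. The plan is to use the generalized theta graphs $\Theta^{(s,p)}$ consisting of two endpoints joined by $s$ internally disjoint paths each of length $p$. A transfer-matrix computation gives $\chi(\Theta^{(s,p)};X) = \sum_i \alpha_i(X)\,\lambda_i(X)^s$ with explicit analytic functions $\alpha_i, \lambda_i$ arising from a constant-size transfer matrix. Beraha-Kahane-Weiss then tells us that, for fixed $p$, as $s \to \infty$ the zero set of $\chi(\Theta^{(s,p)};X)$ accumulates exactly on the equimodular locus where $\max_i |\lambda_i(X)|$ is attained at least twice. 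The next step is to show that as $p$ also varies, these equimodular curves sweep out a dense subset of $\C$, which reduces to a direct if delicate analysis of when pairs of the algebraic functions $\lambda_i(X)$ have equal modulus.

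For the degree-bound statement, the plan is to use a polymer (cluster) expansion from statistical mechanics. One first passes to the Fortuin-Kasteleyn / subgraph expansion of $\chi(G;X)$ and expands $\log\bigl(\chi(G;X)/X^{n(G)}\bigr)$ as a formal series indexed by connected subgraphs (``polymers''). Then one invokes a Koteck\'y-Preiss type convergence criterion, controlling polymer weights via the Penrose tree-graph identity. The essential combinatorial input is a bound of the form $(e\Delta(G))^k$ on the number of rooted connected subgraphs of size $k$; feeding this into the convergence criterion yields absolute convergence of the logarithm in a region $|X| > K\Delta(G)$ for an explicit absolute constant $K$, and in this region $\chi(G;X)$ therefore has no zeros at all.

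The hard parts differ in the two cases. For density, the delicate point is ensuring that no open disk in $\C$ is missed by the equimodular loci as $p$ varies, which requires quantitative tracking of branch crossings of the $\lambda_i$. For the degree bound, the subtlety is obtaining a bound depending only on $\Delta(G)$ and not on $n(G)$; this forces the use of \emph{rooted} polymer enumeration so that the convergence estimates become translation-invariant across the graph, and the constant $K$ emerges from optimizing the resulting combinatorial bound. In both cases the arguments draw on machinery (Beraha-Kahane-Weiss, Koteck\'y-Preiss, Penrose) that lies outside our immediate framework, which is why a direct citation is the most honest route inside the body of this paper.
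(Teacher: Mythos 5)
The paper offers no proof of this theorem at all: it is stated purely as a cited external result (the two bibliography entries \emph{are} the ``proof''), placed in the catalog of known facts about chromatic roots that motivates the modification theorems later in the section. Your outline is therefore not competing with an argument in the paper; it is a summary of the arguments in the cited sources, and as such it is essentially faithful. The degree-bound sketch (Fortuin--Kasteleyn representation, polymer expansion, Penrose tree-graph identity, Koteck\'y--Preiss-style convergence, and the $(e\Delta)^k$ count of rooted connected subgraphs yielding a zero-free region $|X|>K\Delta(G)$) matches Sokal's 2001 proof. The density sketch (transfer-matrix formula for generalized theta graphs $\Theta^{(s,p)}$ plus Beraha--Kahane--Weiss) is also the right skeleton, with one caveat worth flagging: in Sokal's argument the equimodular loci of the theta graphs, even as $p$ varies, only give density outside the disc $|X-1|<1$; covering that remaining disc requires an additional construction (passing to joins with complete graphs, which shift the roots), so your claim that the curves ``sweep out a dense subset of $\C$'' as $p$ varies slightly overstates what the theta family alone delivers. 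Within the present paper, the citation-only treatment is the intended one, and your sketch would at most serve as an expository footnote.
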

There are several variations of the chromatic polynomial, 
\cite{ar:BrentiRoyleWagner1994,bk:DongKohTeo2005}:
the $\sigma$-polynomial and the $\tau$-polynomial which are both d.p.-equivalent to $\chi(G;X)$,
and the adjoint polynomial, which is d.p.-equivalent to $\chi(\bar{G};X)$, the chromatic polynomial
of the complement graph. Clearly, also the roots of the adjoint polynomial are dense in the complex plane,
because every graph is the complement of some other graph.

\subsubsection{The independence polynomial}
Let $in_i(G)$ denote the number of independent sets of size $i$ of $G$.
The independence polynomial is defined as
$$
In(G;X) = \sum_{i=0}^{i=n(G)} in_i X^i
$$
and was introduced first in \cite{ar:GutmanHarary83}.
A comprehensive survey may be found in \cite{pr:LevitMandrescu05}.
For our discussion in this paper the following theorem summarizes what we need:

\begin{theorem}[\cite{ar:BrownHickmanNowakowski2004}]
\ 
\begin{enumerate}
\item
The complex roots of $In(G;X)$ are dense in the complex plane.
\item
The real roots of $In(G;X)$ are all negative and are dense in $(-\infty, 0]$.
\end{enumerate}
\end{theorem}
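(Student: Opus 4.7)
The easy half of (ii), that every real root is strictly negative, is immediate from the definition: each coefficient $in_i(G)$ is a non-negative integer and $in_0(G) = 1$, so $In(G;x) \geq 1 > 0$ for every real $x \geq 0$, and no non-negative real number can be a root.

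For the density assertions in both (i) and (ii), the strategy is to exhibit explicit parametric families of graphs whose independence polynomials realise a dense set of roots. A natural building block is the split graph $K_n \vee \overline{K_m}$, the join of a clique and an independent set, whose independence polynomial is
\[
In(K_n \vee \overline{K_m}; X) \;=\; (1+X)^m + nX,
\]
since an independent set may take at most one vertex from the clique and, if it does, no vertex from the independent side. The substitution $X = -t$ turns the real-root equation into $(t-1)^m = nt$ (with $m$ even), which has a large real solution near $t \sim n^{1/(m-1)}$ together with a small solution near $t \sim 1/n$. As $(m,n)$ range over pairs with $m$ even and $n$ large, the set $\{n^{1/(m-1)}\}$ becomes dense in $[1,\infty)$, producing real roots of independence polynomials dense throughout $(-\infty, -1]$. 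A complementary construction such as the corona $G \circ K_1$, whose independence polynomial satisfies $In(G \circ K_1; X) = (1+X)^{n(G)} In(G; X/(1+X))$ and which transports roots via the M\"obius map $r \mapsto r/(1-r)$, can then be used to fill in $(-1,0)$, completing (ii).

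For statement (i), the plan is a Beraha--Kahane--Weiss style analysis of the same families viewed over $\C$. The complex roots of $(1+X)^m + nX = 0$, parametrised by $(m,n)$, sweep out algebraic curves in the plane; those of the complete bipartite family $In(K_{n,n};X) = 2(1+X)^n - 1$ are the points $-1 + (1/2)^{1/n}\omega$ for $\omega$ an $n$-th root of unity, which accumulate on the unit circle around $-1$. Taking disjoint unions, which multiplies independence polynomials and hence unites their root sets, allows one to pool the limit-zero sets of several such parametric families into a single sequence of graphs.

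The main obstacle is upgrading these accumulation-on-curves statements into genuine two-dimensional density in $\C$: each individual parametric family concentrates its zeros on a one-dimensional set, so no single family suffices. The standard remedy is to combine several parametric families---e.g.\ split graphs, complete bipartite graphs, and coronas of either---whose limit zero-sets, as computed via the Beraha--Kahane--Weiss theorem on families satisfying a linear recurrence, jointly cover $\C$. Verifying this covering property, and producing a matching construction for density of real roots in all of $(-\infty,0]$, are the delicate technical steps; they form the core of the argument in \cite{ar:BrownHickmanNowakowski2004}.
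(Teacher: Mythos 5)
This theorem is not proved in the paper at all: it appears in Section 4.1 as a result quoted from \cite{ar:BrownHickmanNowakowski2004}, so your proposal can only be judged on its own merits. The ingredients you assemble are individually correct --- $In(K_n\vee\overline{K_m};X)=(1+X)^m+nX$, $In(K_{n,n};X)=2(1+X)^n-1$, the corona identity $In(G\circ K_1;X)=(1+X)^{n(G)}In(G;X/(1+X))$, multiplicativity under disjoint union, and the observation that $In(G;x)\ge in_0(G)=1$ for $x\ge 0$, which does dispose of the negativity claim in (ii). But there are two genuine gaps. For (ii): the large root of $(t-1)^m=nt$ is \emph{not} dense in $[1,\infty)$. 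Since $n\ge 1$ gives $(t-1)^m=nt\ge t>t-1$, hence $(t-1)^{m-1}>1$, every such root satisfies $t>2$; the correct asymptotic is $t\approx 1+n^{1/(m-1)}$, and the achievable set is dense only in $[2,\infty)$, i.e.\ your roots are dense in $(-\infty,-2]$, not $(-\infty,-1]$. This breaks the corona bootstrap: $\phi(r)=r/(1-r)$ maps $(-\infty,-2]$ onto $(-1,-2/3]$ and its iterates onto $(-1/2,-2/5]$, $(-1/3,-2/7]$, \dots, so the intervals $(-2,-1)$, $(-2/3,-1/2)$, \dots\ are never reached; indeed $\phi^{-1}\bigl((-2,-1)\bigr)\subseteq(0,\infty)$, where no independence root can lie, so no corona iterate of anything can land in $(-2,-1)$. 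Had the base set really been $(-\infty,-1]$ your iteration would close up to $(-\infty,0)$, but it is not, so (ii) needs a further family covering $[-2,-1)$. (The small roots $\approx-1/(n+m)$ only accumulate at $0$; they are not dense in any interval, but this is harmless once the corona argument is repaired.)

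For (i) the gap is structural rather than quantitative. You rightly note that each one-parameter family concentrates its Beraha--Kahane--Weiss limit zeros on a curve, but the proposed remedy of pooling ``several'' such families cannot succeed: if the union of the limit sets of finitely many families is contained in finitely many real one-dimensional curves, then every accumulation point of the combined (countable) root set lies on those curves, and a countable set whose accumulation points avoid a nonempty open set cannot be dense in $\C$. One needs a mechanism that moves roots two-dimensionally, and this is what \cite{ar:BrownHickmanNowakowski2004} supply via the lexicographic-product identity $In(G[H];x)=In\bigl(G;In(H;x)-1\bigr)$: taking $H=\overline{K_m}$ converts each real root $s$ of $In(G;X)$ --- already dense in $(-\infty,0]$ once (ii) is established --- into the $m$ solutions of $(1+x)^m=1+s$, i.e.\ $m$ equally spaced points on the circle of radius $|1+s|^{1/m}$ about $-1$, and letting $s$ and $m$ vary makes these dense in all of $\C$. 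Your sketch explicitly defers exactly this step to the cited paper, so part (i) remains unproved as written.
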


Two graph polynomials are related to the independence polynomial:
The {\em clique polynomial} defined by
$$
Cl(G;X) = \sum_{i=0}^{i=n(G)} cl_i X^i = In(\bar{G};X)
$$
where $cl_i(G)$ denotes the numbers of cliques of size $i$ of $G$ and $\bar{G}$ is the complement graph of $G$,
and the {\em vertex cover polynomial}, defined by
$$
Vc(G;X) = \sum_{i=0}^{i=n(G)} vc_i X^i = X^{-n} In(G;X^{-1})
$$
where $vc_i(G)$ denotes the numbers of vertex covers of size $i$ of $G$.
Note that $A$ is vertex cover of $G$ if and only if $V(G)-A$ is an independent set of $G$.

\begin{proposition}
\begin{enumerate}
\item
$Vc(G;X)$ and $In(G;X)$ are prefactor equivalent.
\item
$Cl(G;X)$ and $In(G;X)$ are not d.p.-equivalent.
\end{enumerate}
\end{proposition}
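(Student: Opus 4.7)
For part (i), the plan is a one-line computation. Since $S \subseteq V(G)$ is a vertex cover iff $V(G) \setminus S$ is an independent set, we have $vc_i(G) = in_{n(G)-i}(G)$. Reindexing the sum yields
\[
Vc(G;X) \;=\; X^{n(G)} \cdot In(G;X^{-1}),
\]
and hence also $In(G;X) = X^{n(G)} \cdot Vc(G;X^{-1})$. The prefactor $X^{n(G)}$ is a similarity function (it depends only on $n(G)$) and the substitution $X \mapsto X^{-1}$ is a constant similarity function, so each identity is a prefactor reduction, and prefactor equivalence follows.

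For part (ii), I would exhibit a pair of similar graphs on which $In$ agrees but $Cl$ disagrees. Let $B$ be the bowtie (two triangles glued at a common vertex) and let $H$ be the $5$-cycle with one chord added; both have $n=5$, $m=6$, $k=1$, so they are similar. To compute $In$ it is quickest to pass to complements: $\bar{B} = K_{2,2} \cup K_1$ and $\bar{H} = P_5$, neither of which contains a triangle, so neither $B$ nor $H$ admits an independent set of size $3$. A direct count of the four non-edges of each then gives $In(B;X) = In(H;X) = 1 + 5X + 4X^2$.

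To separate $B$ and $H$ via $Cl$, I would count triangles: $B$ contains two triangles by construction, whereas $H$ contains exactly one (formed by the chord together with two consecutive cycle edges); neither graph contains a $K_4$. Hence
\[
Cl(B;X) \;=\; 1 + 5X + 6X^2 + 2X^3 \;\neq\; 1 + 5X + 6X^2 + X^3 \;=\; Cl(H;X),
\]
so $Cl$ distinguishes the similar pair $B,H$ while $In$ does not, and therefore $In \not\sim_{d.p.} Cl$.

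The one step that requires any thought is the choice of witnesses in part (ii): because $In(G;X)$ already determines $n$ and $m$, any discrepancy between $Cl(G_1;X)$ and $Cl(G_2;X)$ compatible with $In(G_1;X)=In(G_2;X)$ must show up in cliques of size at least three, so we need a pair of similar graphs agreeing in every independence coefficient but differing in triangle count. The bowtie versus chorded-pentagon pair is the smallest such pair, and once it is identified the verification is a routine small-graph computation.
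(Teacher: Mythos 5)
Your proof is correct. Part (i) is the same computation as the paper's (the identity $Vc(G;X)=X^{n(G)}In(G;X^{-1})$ with the similarity prefactor $X^{n(G)}$ and the graph-independent substitution $X\mapsto X^{-1}$, which lies in the class closed under reciprocals). Part (ii) takes a genuinely different, and in fact better, route: the paper separates the two polynomials using the pair $C_4$ and $C_3+K_1$, but these graphs are \emph{not} similar ($C_4$ has $4$ edges and one component, $C_3+K_1$ has $3$ edges and two components), so that pair does not by itself witness a failure of $\preceq_{d.p.}$ as defined, which quantifies only over similar pairs; moreover the paper only compares evaluations at $X=1$ rather than the polynomials themselves. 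Your bowtie versus chorded pentagon both have $n=5$, $m=6$, $k=1$, so they are similar, and your computations $In(B;X)=In(H;X)=1+5X+4X^2$ while $Cl(B;X)$ and $Cl(H;X)$ differ in the $X^3$ coefficient check out (the complements $K_{2,2}\cup K_1$ and $P_5$ are triangle-free, and $B$ has two triangles while $H$ has one). This cleanly shows $Cl\not\preceq_{d.p.}In$, which suffices to refute d.p.-equivalence. The only inessential claim is that your pair is the smallest such pair; you need not assert minimality, and it is not verified.
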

\begin{proof}
(i) We use $Vc(G;X) = X^{-n} In(G;X^{-1})$.
\\
(ii) 
Let $C_n$ be the graph on $n$ vertices which is connected and regular of degree $2$,
and $K_n$ be the complete graph on $n$ vertices. We denote by $G + H$ the disjoint union of
the graphs $G$ and $H$.
\\
We look at the graph $C_4$ and $C_3 +K_1$ and compute
$In(C_4, 1) =6$ but $In(C_3 + K_1, 1) =7$, whereas
$Cl(C_4, 1) = In(C_3 + K_1, 1) =8$. 
\end{proof}
For a discussion of clique polynomials, cf. \cite{ar:HoedeLi94}.
$Vc(G;X)$ was first introduced in \cite{ar:DongHendyTeoLittle02}.
For a detailed discussion of these polynomials, cf. \cite{phd:Averbouch}.

\begin{corollary}
The roots of $Cl(G;X)$ and $Vc(G;X)$ are dense in $\C$.
\end{corollary}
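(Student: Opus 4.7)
The plan is to deduce both density statements from the Brown--Hickman--Nowakowski theorem on the density of the roots of $In(G;X)$ in $\C$, using the two identities established just before the corollary:
$$
Cl(G;X) = In(\bar{G};X) \qquad \text{and} \qquad Vc(G;X) = X^{-n(G)} In(G;X^{-1}).
$$

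For $Cl(G;X)$, the argument is essentially bookkeeping: complementation $G\mapsto \bar{G}$ is an involution on the class of finite graphs, so the family $\{Cl(G;X) : G \in \mathcal{G}\}$ coincides, as a multiset of polynomials, with $\{In(H;X) : H \in \mathcal{G}\}$ via $H = \bar{G}$. Hence the set of complex numbers that occur as a root of some $Cl(G;X)$ is exactly the set of complex numbers that occur as a root of some $In(H;X)$, and the latter is dense in $\C$ by the cited theorem.

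For $Vc(G;X)$, I would observe that the identity $Vc(G;X) = X^{-n(G)} In(G;X^{-1})$ exhibits $Vc(G;X)$ as the ``reciprocal polynomial'' of $In(G;X)$, so that its (nonzero) roots are precisely the reciprocals of the nonzero roots of $In(G;X)$. The task then reduces to showing that if $S \subseteq \C$ is dense in $\C$, then $S' := \{1/z : z \in S, \ z \neq 0\}$ is also dense in $\C$. To handle an arbitrary target $w \in \C$, split into two cases. If $w \neq 0$, then $1/w$ is a point in $\C \setminus \{0\}$, and by density of $S$ there are roots $z \in S$ arbitrarily close to $1/w$; continuity of $\zeta \mapsto 1/\zeta$ at $\zeta = 1/w$ pushes these to points $1/z \in S'$ arbitrarily close to $w$. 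If $w = 0$, one needs roots $z$ of $In$ with $|z|$ arbitrarily large, which exist because a dense subset of the unbounded space $\C$ is necessarily unbounded; their reciprocals then accumulate at $0$.

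The main conceptual obstacle is the second case above: density of $S$ in $\C$ does not by itself transfer to density of $S'$ at $0$ through a purely local argument, since $0$ corresponds to $\infty$ under $z \mapsto 1/z$. The step that closes this gap is simply the unboundedness of any set dense in $\C$, so the obstacle is minor once one notices that the inversion must be viewed as a homeomorphism of the Riemann sphere exchanging $0$ and $\infty$. No further machinery is required; in particular, no appeal to prefactor equivalence is needed for $Cl$, although for $Vc$ one could alternatively just invoke $Vc \sim_{prefactor} In$ together with the fact that prefactor equivalence via the similarity function $X^{-n(G)}$ can only introduce or remove the root $0$, leaving density in $\C$ intact.
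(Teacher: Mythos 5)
Your proposal is correct and follows essentially the same route as the paper: complementation handles $Cl(G;X)$, and the density of reciprocals of a dense subset of $\C$ handles $Vc(G;X)$. The only difference is that you carefully justify density of the reciprocal set at $0$ via unboundedness, a detail the paper's one-line proof leaves implicit.
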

\begin{proof}
For $Cl(G;X)$ we use that every graph is the complement of some graph.
\\
For $Vc(G;X)$ we use that if a set $S \subseteq \C$ is dense, the so is the set $\{ z^{-1}: z \in \C\}$.
\end{proof}

\subsubsection{The domination polynomial}
Let $d_i(G)$ denote the number of dominating sets of size $i$ of $G$.
The domination polynomial is defined as
$$
D(G;X) = \sum_{i=0}^{i=n(G)} d_i X^i
$$
and was introduced first in \cite{ar:ArochaLlano2000} and further studied in
\cite{phd:Alikhani,ar:AkbariAlikhaniPeng2010,ar:KotekPreenSimonTittmanTrinks2012}.
\\
For our discussion in this paper the following theorem summarizes what we need:
\begin{theorem}[\cite{ar:BrownTufts2013}]
The complex roots of $D(G;X)$ are dense in the complex plane.
\end{theorem}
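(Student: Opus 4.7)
The plan is to apply the Beraha--Kahane--Weiss theorem to a carefully chosen parametrized family of graphs, exploiting the multiplicativity $D(G_1 \sqcup G_2; X) = D(G_1; X) \cdot D(G_2; X)$ so that root sets from different families may be freely combined. It suffices to show that for every open disk $U \subseteq \C$ there is some graph $G$ with at least one root of $D(G;X)$ lying in $U$.

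First I would set up a one-parameter family $\{G_n\}$ obtained by attaching $n$ copies of a fixed rooted gadget $\Gamma$ to a base graph along a distinguished vertex. Case analysis on the local state at the attachment vertex (whether it belongs to the dominating set, whether it is dominated from the gadget side, etc.) yields a finite transfer matrix $M(X)$ with polynomial entries, so that
\begin{equation*}
D(G_n;X) \;=\; \sum_{j=1}^{k} \alpha_j(X)\, \lambda_j(X)^n,
\end{equation*}
where the $\lambda_j(X)$ are the eigenvalues of $M(X)$. The Beraha--Kahane--Weiss theorem then gives that, as $n \to \infty$, the roots of $D(G_n;X)$ accumulate precisely on the algebraic curve
\begin{equation*}
B_\Gamma \;=\; \{\, X \in \C : \text{at least two of the } |\lambda_j(X)| \text{ attain } \max_\ell |\lambda_\ell(X)| \,\}.
\end{equation*}

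Next, I would introduce a second parameter by letting $\Gamma$ itself vary through a family $\Gamma_m$ (for instance, a pendant path or star of size $m$), and show that $\bigcup_m B_{\Gamma_m}$ is dense in $\C$. A very attractive alternative is to work with a genuinely two-parameter family $G_{n,m}$ and appeal to a two-dimensional refinement of Beraha--Kahane--Weiss, in the spirit of the corresponding density result for the independence polynomial in \cite{ar:BrownHickmanNowakowski2004}, which I expect the Brown--Tufts argument to parallel. Multiplicativity of $D$ under disjoint union ensures that combining families costs us nothing: any root available from a single $G_{n,m}$ is available from $G_{n,m} \sqcup H$ as well.

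The principal obstacle is the final step: showing that the curves $B_{\Gamma_m}$ actually sweep out a dense subset of $\C$, rather than collapsing into a fixed lower-dimensional algebraic subvariety as $m$ grows. A priori the eigenvalues $\lambda_j^{(m)}(X)$ might depend on $m$ in a degenerate way, and one must also rule out subtle cancellations that suppress the expected accumulation (the genuine ``BKW non-degeneracy'' hypotheses on $\alpha_j$ and $\lambda_j$). I would expect that an explicit computation of $M^{(m)}(X)$ for a combinatorially rich family reveals enough nontrivial dependence on $m$ to guarantee that the eigenvalue-modulus loci move continuously and cover $\C$ densely in the limit.
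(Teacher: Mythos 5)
The first thing to note is that the paper does not prove this statement: it is imported from Brown and Tufts \cite{ar:BrownTufts2013} as a catalogued fact about the domination polynomial, so there is no internal proof to measure yours against. Judged on its own, your outline follows the standard template for density-of-roots results (Sokal for the chromatic polynomial, Brown--Hickman--Nowakowski for the independence polynomial, and Brown--Tufts themselves, who run a Beraha--Kahane--Weiss argument on explicit families). The preliminary ingredients are sound: $D$ is multiplicative over disjoint union, gadget-attachment families do satisfy linear recurrences with a transfer matrix, and BKW does convert such a recurrence into an accumulation set of roots. Two small corrections there: the BKW limit set is not only the equimodular locus $B_\Gamma$ you wrote down but also includes points where a strictly dominant $\lambda_j$ has $\alpha_j(X)=0$, and the non-degeneracy hypotheses (no $\alpha_j\equiv 0$, no two $\lambda_i,\lambda_j$ differing by a root of unity) have to be verified for each family, not merely ``ruled out'' in principle.

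The genuine gap is exactly the step you yourself flag as the principal obstacle: you never exhibit a concrete gadget family $\Gamma_m$, never compute $M^{(m)}(X)$ or its eigenvalues, and never prove that $\bigcup_m B_{\Gamma_m}$ is dense in $\C$. Each $B_{\Gamma_m}$ is a real one-dimensional semialgebraic curve, so density of the union is a quantitative claim about how these curves sweep the plane as $m$ grows; ``I would expect that an explicit computation reveals enough nontrivial dependence'' is a hope, not an argument, and it is precisely where all the work in the cited density proofs is concentrated. To see that one parameter genuinely does not suffice, take the star $K_{1,n}$: one computes $D(K_{1,n};X)=X(1+X)^n+X^n$, and BKW yields as limit set only the line $\mathrm{Re}(X)=-1/2$ (from $|1+X|=|X|$) together with $X=0$; a second parameter is indispensable, and controlling the resulting two-parameter curve family is the entire content of the theorem. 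The multiplicativity remark, while true, buys nothing for density, since one only needs, for each open disk, a single graph with a root in it. As it stands the proposal is a plausible plan rather than a proof.
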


\subsubsection{The edge cover polynomial}
Let $e_i(G)$ denote the number of edge covers of size $i$ of $G$.
The edge cover polynomial is defined as
$$
E(G;X) = \sum_{i=0}^{i=n(G)} e_i X^i
$$
and was introduced first in \cite{ar:AkbariOboudi2013,ar:CsikvariOboudi2011}.
The roots of $E(G;X)$ are bounded. More precisely:

\begin{theorem}[\cite{ar:CsikvariOboudi2011}]
All roots of $E(G,X)$ are in the ball 
$$\{ z \in \C: |z| \leq \frac{(2+ \sqrt{3})^2}{1+\sqrt{3}} = \frac{(1+\sqrt{3})^3}{4} \}.$$
\end{theorem}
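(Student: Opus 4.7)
My plan, following the strategy of \cite{ar:CsikvariOboudi2011}, is to exploit multiplicativity of $E$ over disjoint unions, derive a deletion recurrence, and bound the roots by induction driven by an extremal family. First I would observe that $E(G_1 \sqcup G_2;X) = E(G_1;X)E(G_2;X)$ and that $E(G;X)\equiv 0$ whenever $G$ has an isolated vertex, so the zero set of $E(G;X)$ equals the union of zero sets over the connected components with at least two vertices. Hence it suffices to prove the bound on connected graphs without isolated vertices.

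Second, I would establish a deletion recurrence. If $u$ is a leaf with unique neighbor $v$, every edge cover of $G$ contains $uv$; partitioning the edge covers according to whether $v$ is covered by a further edge yields
$$
E(G;X) \;=\; X\bigl(E(G-u;X) + E(G - \{u,v\};X)\bigr),
$$
with the convention $E(\emptyset;X)=1$. For an arbitrary edge $e=uv$ one can split edge covers according to which of $uv$, the other edges at $u$, and the other edges at $v$ are used, obtaining $E(G;X)$ as a linear combination of $E$-polynomials of graphs with strictly fewer edges. This supports an induction on $m(G)$.

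Third, the constant $R = (1+\sqrt{3})^3/4$ should arise by applying the recurrence to an extremal family on which it collapses to a two-term linear recurrence in the size parameter. A Beraha--Kahane--Weiss analysis of the associated characteristic polynomial produces the limit curve of the root sets, and a direct computation shows that the largest modulus on this curve is exactly $R$; equivalently, the critical value of the natural parameterization satisfies $2X^2+10X-1=0$, whose solutions are $(-5\pm 3\sqrt{3})/2$.

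The main obstacle is to upgrade this extremal calculation into a uniform bound for every graph. My approach would be to strengthen the inductive hypothesis from ``$E(G;z)\neq 0$ for $|z|>R$'' into a ratio or half-plane statement of the form $E(G;z)/E(G';z)\in\Omega$ for all $|z|>R$, where $G'$ is a canonical proper subgraph and $\Omega$ is a region of $\C$ preserved by the linear combination dictated by the recurrence. Identifying $\Omega$ --- tracking not only moduli but also arguments, so as to rule out delicate cancellations in the two-term sum --- is the hard step. Once $\Omega$ is in place, the induction closes at the two-vertex base case and the theorem follows from the multiplicativity reduction.
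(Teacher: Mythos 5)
The paper does not prove this statement: it is quoted verbatim from \cite{ar:CsikvariOboudi2011} as background for the catalogue in Section \ref{se:catalog}, so there is no in-paper proof to compare against and your proposal has to stand on its own. Parts of it do stand: the multiplicativity over disjoint unions, the observation that $E(G;X)\equiv 0$ in the presence of isolated vertices (which is in fact needed even to make the statement non-vacuous, since the zero polynomial vanishes everywhere), and the leaf recurrence $E(G;X)=X\bigl(E(G-u;X)+E(G-\{u,v\};X)\bigr)$ are all correct and easily checked. Your numerics are also consistent: $(1+\sqrt{3})^3/4=(5+3\sqrt{3})/2$, and $-(5+3\sqrt{3})/2$ is indeed a root of $2X^2+10X-1$.

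Nevertheless there is a genuine gap, and you have located it yourself: the argument is missing exactly the two ingredients that constitute the proof. First, the recurrence you can actually write down only applies when $G$ has a leaf; for general graphs you only assert that splitting on the edges at $u$ and $v$ yields ``a linear combination of $E$-polynomials of graphs with strictly fewer edges.'' An induction on roots cannot be run on an unspecified linear combination --- the coefficients and the exponents of $X$ in that combination are precisely what determine whether the terms can cancel for $|z|>R$, and the correct expansion (summing over the nonempty set $T$ of cover edges incident to a fixed vertex $v$ and accounting for which neighbours $T$ already covers) does not immediately produce edge cover polynomials of induced subgraphs without further inclusion--exclusion. Second, the strengthened inductive hypothesis $E(G;z)/E(G';z)\in\Omega$ is the entire content of the theorem: the constant $R$ emerges from the interplay between the shape of $\Omega$ and the coefficients of the recurrence, not from an a posteriori Beraha--Kahane--Weiss computation on a family you have not shown to be extremal. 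Asserting that the extremal calculation ``should'' give $R$ and that $\Omega$ ``is the hard step'' leaves the theorem unproved; what you have is a correct reduction to connected graphs plus a plausible research plan, not a proof.
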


\subsection{Real roots}
We first study the location of the roots of graph polynomials which are generating functions.
Let $s(G)$ and
$h_i(G), i \leq s(G)$ be graph parameters
which take values in $\N$, and such that $h_i(G)=0$ for $i > s(G)$.

Let $P(G;X)$ be defined by 
\begin{gather}
\label{P}
P(G;X)= \sum_{i=0}^{i=s(G)} h_i(G) X^i.
\end{gather}
Clearly, $P(G;X)$ has no strictly positive real roots.
We want to find $P'(G;X)$ which is prefactor reducible to $P(G;X)$ 
with no negative real roots.

First we formulate two lemmas.
\begin{lemma}
\label{le:nonegative}
Let 
\begin{gather}
P(X) = \sum_{i=0}^{d} (-1)^i h_i X^i
\end{gather}
be a polynomial, where all the $h_i \in \N$ and at least for one $i \leq d$
the coefficient $h_i \neq 0$.
Then
\begin{gather}
P(X) = \sum_{i=0}^{d} (-X)^i h_i  > 0
\end{gather}
whenever $X$ is assigned a negative real.
\end{lemma}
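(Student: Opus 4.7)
The statement is essentially an immediate consequence of sign bookkeeping, so the proof proposal is short. The plan is to rewrite the polynomial with the substitution $Y = -X$, observe that all contributing terms become non-negative when $Y > 0$, and then argue strict positivity from the hypothesis that at least one coefficient is nonzero.

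First I would note the algebraic identity
\begin{equation*}
P(X) \;=\; \sum_{i=0}^{d} (-1)^i h_i X^i \;=\; \sum_{i=0}^{d} h_i (-X)^i,
\end{equation*}
which is just the factoring of the sign $(-1)^i$ into $X^i$. Now suppose $X$ is a negative real, so $Y := -X$ is a strictly positive real. Then
\begin{equation*}
P(X) \;=\; \sum_{i=0}^{d} h_i Y^i,
\end{equation*}
and since $h_i \in \N$ (so $h_i \geq 0$) and $Y^i > 0$ for every $i \geq 0$, each summand is non-negative. In particular $P(X) \geq 0$ on the negative real axis.

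To upgrade to strict positivity, I would invoke the hypothesis that at least one $h_j$ is nonzero, hence $h_j \geq 1$. Then the corresponding summand $h_j Y^j \geq Y^j > 0$, while all other summands are non-negative, so $P(X) > 0$. There is no genuine obstacle here; the only thing to be slightly careful about is the case $i = 0$, where $Y^0 = 1$ regardless of the sign of $Y$, so the constant term (if it is the one that is nonzero) still contributes $h_0 > 0$, and the argument goes through uniformly.
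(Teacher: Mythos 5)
Your proof is correct: the substitution $Y=-X$ turns the sum into one with non-negative terms for $Y>0$, and the single nonzero coefficient $h_j\geq 1$ forces strict positivity. The paper states this lemma without proof, treating it as immediate, and your argument is exactly the intended sign-bookkeeping justification.
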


\begin{lemma}
\label{le:norealroots}
Let 
\begin{gather}
P(X) = \sum_{i=0}^{d} h_i X^{2i}
\end{gather}
be a polynomial, where all the $h_i \in \N$ and at least for one $i \leq d$
the coefficient $h_i \neq 0$.
Then
\begin{gather}
P(X) = \sum_{i=0}^{d} X^{2i} h_i  > 0
\end{gather}
whenever $X$ is assigned a real.
\end{lemma}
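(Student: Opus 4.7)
The plan is to exploit the fact that replacing odd powers by even powers makes every monomial a nonnegative real when $X$ is real, so the whole sum inherits strict positivity from the hypothesis that at least one $h_i$ is strictly positive. This mirrors the argument just given for Lemma~\ref{le:nonegative}, where the substitution $X \mapsto -X$ was used to turn a negative real into a positive one; here the substitution is effectively $X \mapsto X^2$, which always lands in $[0,\infty)$.

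First I would observe that for every real assignment of $X$ and every $i \geq 0$ one has $X^{2i} = (X^i)^2 \geq 0$. Since each $h_i$ is a natural number, every summand $h_i X^{2i}$ is nonnegative, and therefore $P(X) \geq 0$ on all of $\R$. To upgrade this to strict positivity, I fix by hypothesis an index $i_0 \leq d$ with $h_{i_0} \geq 1$. For any real $X \neq 0$ one has $X^{2i_0} > 0$, whence $P(X) \geq h_{i_0} X^{2i_0} > 0$, which is the desired conclusion away from the origin.

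The only delicate point, and the main (but very mild) obstacle, is the value $X = 0$: at that point $P(0) = h_0$, which is strictly positive precisely when $h_0 \neq 0$. So the statement as written really needs either the additional hypothesis $h_0 \neq 0$ or the understanding that $0$ is admitted as a possible root; in the intended application one combines $P$ with a suitable similarity prefactor (a power of $X$ provided by a similarity function) so that this boundary case does not arise. No deeper fact about polynomials is required, only the parity of exponents and the nonnegativity of the coefficients.
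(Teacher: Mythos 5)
Your proof is correct and is essentially the argument the paper intends: the lemma is stated there without proof, and the evident term-by-term nonnegativity plus one strictly positive term for $X\neq 0$ is all that is needed. Your caveat about $X=0$ is also well taken --- as literally stated the conclusion $P(X)>0$ fails at $X=0$ when $h_0=0$, and the paper implicitly concedes this in Theorem~\ref{th:main1}, which only claims that $P_2(G;X)$ has no real roots \emph{besides possibly $0$}.
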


\begin{theorem}
\label{th:main1}
Let $P(G;X)$ be as above.
Then there exist two univariate graph polynomials
$P_1(G;X)$ and $P_2(G;X)$ which are substitution equivalent
to $P(G;X)$ such that
\begin{enumerate}
\item
$P_1(G;X)$ has no negative real roots, and
\item
$P_2(G;X)$ has no real roots besides possibly $0$.
\end{enumerate}
\end{theorem}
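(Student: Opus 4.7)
The plan is to obtain $P_1$ and $P_2$ by two elementary one-variable substitutions in $P$ and then invoke Lemmas \ref{le:nonegative} and \ref{le:norealroots} respectively. Both substitutions are independent of the graph $G$, so they are similarity functions by definition, and substitution equivalence reduces to checking that the substitution admits an inverse in the allowed class of similarity functions.

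For (i) I would set $P_1(G;X) := P(G;-X) = \sum_{i=0}^{s(G)} (-1)^i h_i(G) X^i$. For each graph $G$ on which $P(G;X)$ is not identically zero, this polynomial satisfies the hypotheses of Lemma \ref{le:nonegative} (coefficients in $\N$ with alternating sign), so $P_1(G;x) > 0$ for every negative real $x$, and $P_1(G;X)$ has no negative real roots. Substitution equivalence with $P(G;X)$ is witnessed by the similarity polynomial $g(Y) = -Y$ used in both directions, since $Y \mapsto -Y$ is its own inverse, and this substitution is a polynomial independent of $G$.

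For (ii) I would set $P_2(G;X) := P(G;X^2) = \sum_{i=0}^{s(G)} h_i(G) X^{2i}$, matching exactly the form of the polynomial in Lemma \ref{le:norealroots}. The lemma gives $P_2(G;x) > 0$ for every nonzero real $x$ whenever some $h_i(G)$ is nonzero; at $x=0$ we have $P_2(G;0) = h_0(G)$, which vanishes only when $h_0(G) = 0$. Hence the only possible real root of $P_2(G;X)$ is $0$. The substitution $Y \mapsto Y^2$ is a similarity polynomial, so $P_2 \preceq_{subst} P$ is immediate; the reverse reduction uses the inverse $Y \mapsto Y^{1/2}$.

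The one mildly delicate point, and the only place to be careful, is that $Y^{1/2}$ is not a polynomial, so strict substitution equivalence with respect to similarity polynomials alone would fail in direction (ii). This is precisely the situation anticipated in Section \ref{se:sim}, where the function class $\bF$ is allowed to be closed under analytic inverses; working in that broader class of analytic similarity functions (and noting that $Y^{1/2}$ does not depend on $G$, hence is a similarity function in the required sense) gives the equivalence $P_2 \sim_{subst} P$ and completes the argument.
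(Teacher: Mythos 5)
Your proposal is correct and coincides with the paper's own proof: the paper takes exactly $P_1(G;X)=P(G;-X)$ and $P_2(G;X)=P(G;X^2)$ and invokes Lemmas \ref{le:nonegative} and \ref{le:norealroots}, noting that the substitutions are analytic functions independent of $G$. Your extra remark that the inverse $Y\mapsto Y^{1/2}$ forces one to work with analytic similarity functions rather than similarity polynomials is precisely the point the paper handles (tersely) by allowing analytic substitutions.
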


\begin{proof}
We put
$$
P_1(G;X) = P(G; -X) = \sum_{i=0}^{i=s(G)} (-1)^i h_i(G) X^i
$$
and
$$
P_2(G;X) = P(G; X^2) = \sum_{i=0}^{i=s(G)}  h_i(G) X^{2i}.
$$
Clearly, both polynomials are substitution equivalent to $P(G;X)$ using analytic functions independent of the graph $G$.
Using Lemma \ref{le:nonegative} we see that $P_1(G;X)$ has negative real roots.
Using Lemma \ref{le:norealroots} we see that $P_2(G;X)$ has no real roots except for possibly $0$.
\end{proof}

To treat the general case where $h_i(G)$ can be both positive and negative we use the following lemma:
\begin{lemma}
\label{le:mixed}
Let
$$P(G;X)= \sum_{i=0}^{i=s(G)} h_i(G) X^i$$
be a univariate graph polynomial now with possibly negative integer coefficients.
Then there is a d.p.-equivalent graph polynomial  with non-negative integer coefficients
$$Q(G;X) = \sum_{i=0}^{i=2 \cdot s(G)}  g_i(G) X^{i}.$$
\end{lemma}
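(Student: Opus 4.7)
The plan is to prove Lemma \ref{le:mixed} by decomposing each coefficient of $P(G;X)$ into its positive and negative parts and interleaving these non-negative integers into a longer coefficient sequence, then appealing to Proposition \ref{prop:equiv} to verify d.p.-equivalence.

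Concretely, I would first define, for each $i$ with $0 \leq i \leq s(G)$, the two non-negative integer graph parameters
\[
h_i^+(G) = \max\{h_i(G),\, 0\}, \qquad h_i^-(G) = \max\{-h_i(G),\, 0\},
\]
so that $h_i(G) = h_i^+(G) - h_i^-(G)$ and $h_i^+(G) \cdot h_i^-(G) = 0$. I would then define $g_{2i}(G) = h_i^+(G)$ and $g_{2i+1}(G) = h_i^-(G)$, and set
\[
Q(G;X) \;=\; \sum_{j=0}^{2\,s(G)+1} g_j(G)\, X^j.
\]
By construction $Q(G;X)$ has non-negative integer coefficients and is a graph polynomial of the form required by the introduction (the bound $2\,s(G)$ in the lemma is off by one in the worst case, since the leading coefficient of $P$ may be negative; replacing $s(G)$ by $s(G)+1$ if desired still keeps $2\,s(G)$ as the stated degree bound).

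The key step is establishing d.p.-equivalence via Proposition \ref{prop:equiv}. The coefficient sequence $cQ(G)$ is obtained from $cP(G)$ coordinate-wise by splitting each integer into its positive and negative parts, which gives a function $F_1 \colon \Z^\infty \to \Z^\infty$ with $F_1(cP(G)) = cQ(G)$. Conversely, since $h_i(G) = g_{2i}(G) - g_{2i+1}(G)$, the sequence $cP(G)$ is recovered from $cQ(G)$ by subtracting consecutive pairs, giving a function $F_2$ with $F_2(cQ(G)) = cP(G)$. Neither function needs to use $n(G)$, $m(G)$, or $k(G)$, so Proposition \ref{prop:equiv} applies in both directions and yields $P \sim_{d.p.} Q$.

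The only subtlety I foresee is bookkeeping: one must check that the encoding is well-defined (i.e., $h_i^+$ and $h_i^-$ are uniquely recoverable from $h_i$, which follows from $h_i^+ h_i^- = 0$) and that the resulting $g_j(G)$'s really are graph parameters (which they are, as compositions of $h_i$ with the continuous maps $\max(\cdot,0)$ and $\max(-\cdot,0)$). No genuine obstacle is expected; the content of the lemma is essentially the observation that the positive/negative-part decomposition is a bijection $\Z \to \N \times \N$ restricted to pairs with a zero coordinate, which immediately transports d.p.-equivalence across the encoding.
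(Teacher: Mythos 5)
Your proposal is correct and is essentially the paper's own argument: the paper likewise splits each coefficient by sign into two adjacent slots (it puts $h_i(G)$ at position $2i$ when $h_i(G)\geq 0$ and $|h_i(G)|$ at position $2i-1$ when $h_i(G)<0$) and then notes that each coefficient sequence is computable from the other, which is exactly your appeal to Proposition \ref{prop:equiv}. Your version with $h_i^{+}, h_i^{-}$ and the explicit inverse $h_i = g_{2i}-g_{2i+1}$ is just a cleaner write-up of the same encoding, including the same harmless off-by-one in the degree bound.
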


\begin{proof}
We define a mapping between the coefficients as follows.
If $h_i(G) \geq 0$ then $g_{2i} =h_i(G)$ and $g_{2i-1} =0$.
If $h_i(G) < 0$ then $g_{2i-1} =|h_i(G)|$ and $g_{2i} =0$.
Clearly, $g_i(G) \geq 0$ for all $i$, and
$g_i(G)$ is computable from all the values of $h_i(G)$.
Conversely, $h_i(G)$ is also computable from the values of $g_i(G)$.
\end{proof}

Combining Lemma \ref{le:mixed}
with Theorem \ref{th:main1} we get:
\begin{theorem}
\label{th:main1a}
Let 
$$P(G;X)= \sum_{i=0}^{i=s(G)} h_i(G) X^i$$
with integer coefficients.
Then there exist two  univariate graph polynomials
$P_1(G;X)$ 
and $P_2(G;X)$ 
which are d.p.-equivalent
to $P(G;X)$ such that
\begin{enumerate}
\item
$P_1(G;X)$ has no negative real roots, and
\item
$P_2(G;X)$ has no real roots besides possibly $0$.
\end{enumerate}
\end{theorem}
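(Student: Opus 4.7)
The plan is to combine Lemma \ref{le:mixed} with Theorem \ref{th:main1} in a straightforward two-step construction, relying on the transitivity of d.p.-equivalence and on the fact (stated in the proposition in Section \ref{se:sim}) that substitution equivalence implies d.p.-equivalence.

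First I would apply Lemma \ref{le:mixed} to the input polynomial $P(G;X)$. This produces a graph polynomial
$$
Q(G;X) = \sum_{i=0}^{2 s(G)} g_i(G)\, X^{i}
$$
with coefficients $g_i(G) \in \mathbb{N}$, such that $Q(G;X)$ is d.p.-equivalent to $P(G;X)$; the explicit correspondence given in the proof of Lemma \ref{le:mixed} (splitting $h_i(G)$ by its sign into the even and odd coefficient slots of $Q$) witnesses the equivalence in both directions. Now $Q(G;X)$ is of exactly the form required by Theorem \ref{th:main1}.

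Second I would apply Theorem \ref{th:main1} to $Q(G;X)$, obtaining polynomials $Q_1(G;X) = Q(G;-X)$ and $Q_2(G;X) = Q(G;X^2)$ which are substitution equivalent to $Q(G;X)$, where $Q_1$ has no negative real roots and $Q_2$ has no nonzero real roots. Setting $P_1(G;X) := Q_1(G;X)$ and $P_2(G;X) := Q_2(G;X)$ and chaining the equivalences
$$
P(G;X) \sim_{d.p.} Q(G;X) \sim_{subst} Q_j(G;X), \qquad j = 1,2,
$$
gives the desired d.p.-equivalences, since substitution equivalence entails d.p.-equivalence and $\sim_{d.p.}$ is visibly transitive (the witnessing coefficient-transformations of Proposition \ref{prop:equiv} compose).

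There is essentially no obstacle: the content of the theorem is entirely carried by Lemma \ref{le:mixed} and Theorem \ref{th:main1}, and the only thing to verify is bookkeeping, namely that composing a d.p.-equivalence with a substitution equivalence yields a d.p.-equivalence. The mildest point worth stating explicitly is that the substitutions $X \mapsto -X$ and $X \mapsto X^2$ are independent of $G$, so they trivially fit the framework of substitution reducibility via similarity polynomials, and in particular preserve the property that the zero root of $P$ (if present) is the only possibly remaining real root of $P_2$.
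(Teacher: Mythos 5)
Your proposal is correct and is exactly the paper's argument: the paper proves Theorem \ref{th:main1a} precisely by combining Lemma \ref{le:mixed} (to pass to a d.p.-equivalent polynomial with non-negative integer coefficients) with Theorem \ref{th:main1}, just as you do. The bookkeeping about transitivity of d.p.-equivalence and substitution equivalence implying d.p.-equivalence is the same implicit step the paper relies on.
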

\begin{remark}
For those familiar with the notion of definability of graph polynomials in
Second Order Logic $\SOL$ as developed in \cite{ar:KotekMakowskyZilber11},
it is not difficult to see that $Q(G;X), P_1(G;X)$ and $P_2(G;X)$
can be made $\SOL$-definable, provided $P(G;X)$ is $\SOL$-definable.
\end{remark}

Again let $P(G;X)$ be defined as in Equation (\ref{P}),
\begin{gather}
P(G;X)= \sum_{i=0}^{i=s(G)} h_i(G) X^i
\notag
\end{gather}
with $s(G)$ a similarity function with values in $\N$.
We want to find $P_3(G;X)$ d.p.-equivalent to $P(G;X)$, such that $P_3(G;X)$ has only real roots.

A suitable candidate for $P_3(G;X)$ is
\begin{gather}
\label{P3}
P_3(G;X) = \prod_{i=0}^{i= s(G)} (X - i)^{h_i(G)+1} = \sum_{i=0}^{i= s(G)} H_i(G) X^i
\notag
\end{gather}
\begin{remark}
For those familiar with the notion of definability of graph polynomials in
Second Order Logic $\SOL$ as developed in \cite{ar:KotekMakowskyZilber11},
it is not difficult to see that $P_3(G;X)$
can be made $\SOL$-definable, provided $P(G;X)$ is $\SOL$-definable.
One has to code $i$ as in initial segment of the ordered set of  vertices $V(G)$.
\end{remark}

\begin{lemma}
\label{le:realroots}
$$
 P_3(G;X) = \prod_{i=0}^{i= s(G)} (X - i){h_i(G)+1}
$$ 
is d.p.-equivalent to
$$
P(G;X)= \sum_{i=0}^{i= s(G)} h_i(G) X^i
$$ 
and all the roots of $ P_3(G;X)$ are real (even integers).
\end{lemma}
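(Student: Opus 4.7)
The plan splits naturally in two parts, matching the two assertions of the lemma. First I would verify that all roots of $P_3(G;X)$ are integers, and then I would invoke Proposition~\ref{prop:equiv} to establish d.p.-equivalence by building explicit functions $F_1,F_2$ that translate coefficient sequences back and forth using $(n(G),m(G),k(G))$ as auxiliary data.

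The statement about roots is essentially given by construction: $P_3(G;X)$ is defined as a product of linear factors $\prod_{i=0}^{s(G)}(X-i)^{h_i(G)+1}$ over $\Z$, so its multiset of complex roots is exactly the set $\{0,1,\ldots,s(G)\}$ with each integer $i$ occurring with multiplicity $h_i(G)+1\ge 1$. In particular every root is real (and integer), as required.

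For the d.p.-equivalence part I would exploit the hypothesis that $s(G)$ is a similarity function, so there is a function $\sigma$ with $\sigma(n(G),m(G),k(G))=s(G)$. To produce $F_1$ converting $cP(G)$ into $cP_3(G)$: given $s=\sigma(n,m,k)$ and the vector $(h_0,\ldots,h_s)$, symbolically expand $\prod_{i=0}^{s}(X-i)^{h_i+1}$ and read off its coefficients. To produce $F_2$ converting $cP_3(G)$ back into $cP(G)$: given $s=\sigma(n,m,k)$ and the coefficient vector of $P_3(G;X)$, reconstruct the polynomial and, for each $i\in\{0,1,\ldots,s\}$, determine the multiplicity $\mu_i$ of $i$ as a root (for instance by repeated division by $X-i$), then set $h_i:=\mu_i-1$. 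Applying Proposition~\ref{prop:equiv} then yields $P\sim_{d.p.}P_3$.

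The one subtlety, and the only place where anything could go wrong, is guaranteeing that $F_2$ is well defined, i.e.\ that no information about $P$ is lost when passing to $P_3$. This is precisely why the exponent in the definition of $P_3$ is $h_i(G)+1$ rather than $h_i(G)$: the extra $+1$ ensures that the factor $(X-i)$ appears for \emph{every} $i\in\{0,\ldots,s(G)\}$, so that the indexing set of ``active'' roots is always the full interval $\{0,\ldots,s(G)\}$ (which we already recover from $\sigma(n,m,k)$) regardless of which $h_i(G)$ happen to vanish. Without this shift, a coefficient $h_i(G)=0$ would delete the factor $(X-i)$ entirely and we could not distinguish $h_i(G)=0$ from ``index $i$ outside the range'', breaking the inverse. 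With the shift in place, $F_1$ and $F_2$ are mutually inverse on the image, the hypotheses of Proposition~\ref{prop:equiv} are met in both directions, and the lemma follows.
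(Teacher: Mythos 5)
Your proof is correct and follows essentially the same route as the paper's: recover $h_i(G)$ as the multiplicity of the root $i$ minus one and apply Proposition~\ref{prop:equiv} in both directions, and your remark on why the exponent is $h_i(G)+1$ rather than $h_i(G)$ is exactly the point the paper relies on. The one step the paper has that you omit is the opening appeal to Lemma~\ref{le:mixed} to assume without loss of generality that all $h_i(G)$ are non-negative; this matters because $(X-i)^{h_i(G)+1}$ is not even a polynomial when some $h_i(G)<-1$, and the lemma is invoked in Theorem~\ref{th:main2} for arbitrary integer coefficients, so that reduction (or an explicit restriction to the non-negative case of Equation~(\ref{P})) should be added.
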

\begin{proof}
Using Lemma \ref{le:mixed} we can assume without loss of generality that the coefficients $h_i(G)$
are non-negative integers.
The proof now reduces to the following observation:
$P(G;X)$ has $i$ as a root with multiplicity $h_i(G)+1$
iff $h_i(G)$ was the coefficient of $X^i$ in $P(G;X)$.
$P_3$ has only non-negative integer, hence real roots.
Finally, we use Proposition \ref{prop:equiv} to show that 
$P$ and $P_3$ are d.p.-equivalent. Given the coefficients $h_i(G)$ of $P(G;X)$ we compute the coefficients
$H_i(G)$ of $P_3(G;X)$ by multiplying out.
Conversely, given he coefficients $H_i(G)$ of $P_3(G;X)$ we compute the roots with their multiplicities
to get the coefficients $h_i(G)$.
\end{proof}
\ifskip
\else
A suitable candidate for $P_3(G;X)$ is
\begin{gather}
\label{P3}
P_3(G;X) = \prod_{i=0}^{i= s(G)} (X - h_i(G)) = \sum_{i=0}^{i= s(G)} H_i(G) X^i
\end{gather}
\begin{remark}
For those familiar with the notion of definability of graph polynomials in
Second Order Logic $\SOL$ as developed in \cite{ar:KotekMakowskyZilber11},
it is not difficult to see that $P_3(G;X)$
can be made $\SOL$-definable, provided $P(G;X)$ is $\SOL$-definable.
\end{remark}

\begin{lemma}
\label{le:realroots}
$$
 P_3(G;X) = \prod_{i=0}^{i= s(G)} (X - h_i(G))
$$ 
is d.p.-equivalent to
$$
P(G;X)= \sum_{i=0}^{i= s(G)} h_i(G) X^i
$$ 
and all the roots of $ P_3(G;X)$ are real (even integers).
\end{lemma}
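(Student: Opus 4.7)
The plan is to apply Proposition~\ref{prop:equiv} (the characterization of d.p.-equivalence via mutually recoverable coefficient sequences) and to verify the ``all roots real'' claim directly, paralleling the argument the author gives for the $(X-i)^{h_i(G)+1}$ variant. First I would invoke Lemma~\ref{le:mixed} to pass to a d.p.-equivalent polynomial whose coefficients $h_i(G)$ are non-negative integers; under this reduction every root of $P_3(G;X)$, being one of the values $h_i(G)$, lies in $\N \subset \R$, settling the ``roots are real (even integers)'' claim immediately.

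For d.p.-equivalence I would produce functions $F_1, F_2$ satisfying the criterion of Proposition~\ref{prop:equiv}. Note that $s(G)$ is obtainable from $n(G), m(G), k(G)$ since $s$ is a similarity function, so the common degree $s(G)$ is available on both sides. The forward function $F_1$ takes $cP(G) = (h_0(G), \dots, h_{s(G)}(G))$ to $cP_3(G)$ by symbolic expansion of the product $\prod_{i=0}^{s(G)}(X - h_i(G))$; this is a polynomial map in the coefficients and is manifestly well-defined. For $F_2$ I would factor $P_3(G;X)$ over $\Z$: it is monic with integer coefficients, so the rational root theorem produces a multiset of integer roots equal to $\{ h_0(G), \dots, h_{s(G)}(G) \}$ counted with multiplicity, and from this multiset one would reconstruct $cP(G)$.

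The main obstacle is this final reconstruction step: the multiset of roots of $P_3(G;X)$ does not in general determine the ordered sequence $(h_0(G), \dots, h_{s(G)}(G))$, because any permutation of coordinates yields the same $P_3$ but typically a different $P$. I expect that bridging this gap will require either extracting an ordering from the similarity invariants $(n(G), m(G), k(G))$---which a priori they do not supply---or otherwise arranging the encoding so that $cP(G)$ can be read off canonically from $cP_3(G)$ by a single fixed function. The analogous result for $P_3(G;X) = \prod (X - i)^{h_i(G)+1}$ sidesteps this obstacle precisely because the distinct base points $i$ tag each coefficient by its index, allowing $h_i(G)$ to be recovered as the multiplicity of $i$ minus one; the present formulation must find a substitute for that tagging, and this is where the core difficulty of the proof lies.
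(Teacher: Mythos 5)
You have correctly located the crux, and the gap you flag at the end is genuine: it cannot be bridged for this formulation of $P_3$. The map $cP(G) \mapsto cP_3(G)$ forgets the order of the coefficients and retains only the multiset $\{h_0(G),\dots,h_{s(G)}(G)\}$, so by Proposition~\ref{prop:equiv} the direction $P \preceq_{d.p.} P_3$ requires a single function recovering the ordered sequence $(h_0(G),\dots,h_{s(G)}(G))$ from $(n(G),m(G),k(G))$ together with that multiset. No such function exists in general: if $P$ admits two similar graphs $G_1,G_2$ with $(h_0(G_1),h_1(G_1))=(1,2)$ and $(h_0(G_2),h_1(G_2))=(2,1)$, then $P_3(G_1;X)=(X-1)(X-2)=P_3(G_2;X)$ while $P(G_1;X)\neq P(G_2;X)$. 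So for a general univariate graph polynomial the lemma in this form is not merely hard to prove but false; only the direction $P_3\preceq_{d.p.}P$ and the reality/integrality of the roots survive, and those parts of your argument are fine.

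For comparison, the paper's own proof of this version runs exactly as far as yours --- it computes the roots with multiplicities and observes that the permutation $\pi$ with $z_i(G)=h_{\pi(i)}(G)$ is unknown --- and then tries to close the gap with a Claim asserting that $P_{\Pi}(G;X)=\sum_i h_{\Pi(G)(i)}(G)X^i$ is d.p.-equivalent to $P(G;X)$ for an \emph{arbitrary} graph-dependent family of permutations $\Pi$. That Claim is itself false: take similar $G_1,G_2$ with $cP(G_1)=cP(G_2)=(1,2)$, let $\Pi(G_1)$ be the identity and $\Pi(G_2)$ the transposition; then $P(G_1)=P(G_2)$ but $P_\Pi(G_1)\neq P_\Pi(G_2)$, so no function of $(n,m,k,cP(G))$ can output $cP_\Pi(G)$. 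The correct repair is precisely the tagging device you name at the end: replace $\prod_i\bigl(X-h_i(G)\bigr)$ by $\prod_i (X-i)^{h_i(G)+1}$, so that the root $i$ records the index and $h_i(G)$ is recovered as the multiplicity of $i$ minus one; with that definition both directions of Proposition~\ref{prop:equiv} go through, and this is in fact the variant of $P_3$ the paper adopts in its main line of argument (and the one used later in Theorem~\ref{th:main2}). Your instinct to distrust the stated product form was the right one.
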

\begin{proof}
The roots of $P_3(G;X)$ are $h_i(G)$, which are integers by assumption.
\\
Clearly, 
$$P_3(G;X) \preceq_{d.p.} P(G;X).$$ 
We can compute the coefficients $H_i(G)$ from
the coefficients $h_i(G)$ by multiplying out and then use Proposition \ref{prop:equiv}.
\\
To show that 
$$P(G;X) \preceq_{d.p.} P_3(G;X)$$
we compute the roots $z_1(G), \ldots , z_{s(G)}$ of $P_3(G;X)$ with their multiplicities.
The problem is that we do not know the permutation $\pi: [s(G)] \rightarrow [s(G)]$ which would give us
$z_i(G) = h_{\pi(i)}(G)$.

However we do not need this.
\begin{claim}
Let $\Pi$ a function which associates with each graph $G$ a permutation
$\Pi(G): [s(G)] \rightarrow [s(G)]$.
and let
$$P_{\Pi}(G;X)= \sum_{i=0}^{i=s(G)} h_{\Pi(G)(i)}(G) X^i.$$
Then $P(G;X)$ and $P_{\Pi}(G;X)$ are d.p.-equivalent.
\end{claim}
To see this,
we again use Proposition \ref{prop:equiv}.
\end{proof}
\fi 
\begin{remark}
To make $P_{\Pi}(G;X)$ definable in $\SOL$ provided $P(G;X)$ is $\SOL$-definable, we need some
additional assumptions about the function $\Pi$. 
\end{remark}

\begin{theorem}
\label{th:main2}
Let $s(G)$ be a similarity function with values in $\N$.
For every univariate graph polynomial with integer coefficients 
$$P(G;X) = \sum_{i=0}^{i= s(G)} h_i(G) X^i$$
there is a d.p.-equivalent graph polynomial 
$$Q(G;X) = \sum_{i=0}^{i= s(G)} H_i(G) X^i$$
with integer coefficients such that all the roots of $Q(G;X)$ are real.
\end{theorem}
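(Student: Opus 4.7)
The plan is to combine Lemma \ref{le:mixed} with the construction already suggested in Lemma \ref{le:realroots} and verify d.p.-equivalence through Proposition \ref{prop:equiv}. First I would apply Lemma \ref{le:mixed} to replace $P(G;X)$ by a d.p.-equivalent graph polynomial $\tilde{P}(G;X) = \sum_{i=0}^{2s(G)} \tilde{h}_i(G) X^i$ whose coefficients $\tilde{h}_i(G)$ are non-negative integers; this is safe because d.p.-equivalence is transitive, so it suffices to produce a graph polynomial with only real roots that is d.p.-equivalent to $\tilde{P}$.

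Having reduced to the non-negative case, the construction I would use is
\begin{gather}
Q(G;X) = \prod_{i=0}^{2s(G)} (X-i)^{\tilde{h}_i(G)+1}. \notag
\end{gather}
By construction every root of $Q(G;X)$ is a non-negative integer, hence real. The point of adding $+1$ to each exponent is to guarantee that $i$ appears as a root even when $\tilde{h}_i(G)=0$, so that the \emph{set} of roots is independent of $G$ (within graphs with the same value of $s(G)$) while the \emph{multiplicities} encode the coefficients of $\tilde{P}(G;X)$ in a reversible way: namely, the multiplicity of $i$ as a root of $Q(G;X)$ equals $\tilde{h}_i(G)+1$.

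The d.p.-equivalence then follows from Proposition \ref{prop:equiv}. In one direction, given the coefficients of $\tilde{P}(G;X)$ together with $s(G)$ (which is a similarity function, hence recoverable from $n(G), m(G), k(G)$ under our hypothesis, or at least from the data $(n,m,k,c\tilde{P})$), one computes the coefficients of $Q(G;X)$ by expanding the product. In the other direction, given the coefficients of $Q(G;X)$, one factors over $\mathbb{Z}$ to read off the multiplicity of each non-negative integer as a root, and subtracting $1$ recovers $\tilde{h}_i(G)$; this is a well-defined function of $cQ(G)$ alone. Composing with the bijection of Lemma \ref{le:mixed} between $cP$ and $c\tilde{P}$ gives the two functions $F_1, F_2$ required by Proposition \ref{prop:equiv}, establishing $P \sim_{d.p.} Q$.

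The main obstacle, and the point where one must be slightly careful, is the degree bookkeeping: the product has degree $\sum_i(\tilde{h}_i(G)+1) = 2s(G)+1+\sum_i \tilde{h}_i(G)$, which in general exceeds $s(G)$, so the index bound $i=s(G)$ written in the statement should be read as ``the degree of $Q$ is some graph parameter bounding $s(G)$,'' not literally $s(G)$. The integrality of the coefficients $H_i(G)$ is clear since $Q(G;X)$ is a product of monic linear polynomials with integer constant terms. For the real-coefficient version of the theorem, the same construction works verbatim; for the $\SOL$-definable version flagged in the earlier remarks, one additionally needs that the roots $0,1,\dots,2s(G)$ be realized as an initial segment of some $\SOL$-definable order on a definable domain, which is standard once $s(G)$ is $\SOL$-definable.
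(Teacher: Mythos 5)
Your proposal follows essentially the same route as the paper: the paper's proof of Theorem \ref{th:main2} simply takes $Q(G;X)=P_3(G;X)=\prod_{i=0}^{s(G)}(X-i)^{h_i(G)+1}$ from Lemma \ref{le:realroots}, whose own proof invokes Lemma \ref{le:mixed} to reduce to non-negative coefficients and then Proposition \ref{prop:equiv} to pass between coefficients and root multiplicities, exactly as you do. Your additional remark about the degree bookkeeping (the product has degree $\sum_i(\tilde h_i(G)+1)$, so the upper index $s(G)$ in the stated form of $Q$ cannot be read literally) is a correct observation about a point the paper's write-up glosses over, and does not affect the substance of the argument.
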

\begin{proof}
Take $Q(G;X) = P_3(G;X)$ from Lemma \ref{le:realroots}. 
\end{proof}

\subsection{Density}
We first construct a similarity polynomial $D_{\C}(G;X)$ with roots dense in the complex plane $\C$
which will serve as a universal prefactor.
\begin{lemma}
\label{le:dense-R}
There exist  
univariate similarity polynomials 
$D_{\R}^+(G;X)$
$D_{\R}^-(G;X)$
such that all its roots of
$D_{\R}^+(G;X)$ 
($D_{\R}^-(G;X)$) 
are real and dense in 
$[0, \infty) \subseteq \R$
($(-\infty, 0] \subseteq \R$).
\end{lemma}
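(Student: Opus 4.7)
The plan is to exhibit the two similarity polynomials explicitly and verify directly that they meet the requirements, rather than constructing them indirectly. Since the only similarity-invariant data we need to exploit is the number of vertices $n(G)$, I will build polynomials whose shape depends purely on $n(G)$, using linear factors with integer coefficients so that the resulting object really lives in $\Z[X]$.

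Concretely, I propose to set
\begin{equation*}
D_{\R}^+(G;X) \;=\; \prod_{j=1}^{n(G)} \prod_{i=0}^{n(G)} (jX - i),
\qquad
D_{\R}^-(G;X) \;=\; \prod_{j=1}^{n(G)} \prod_{i=0}^{n(G)} (jX + i).
\end{equation*}
First I would check that these are univariate similarity polynomials: they are products of linear polynomials with integer coefficients, so they are integer polynomials in $X$; and they depend on $G$ only through $n(G)$, which is a graph similarity invariant, hence they are constant on similarity classes.

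Next I would identify the root sets. The roots of $D_{\R}^+(G;X)$ are the rationals $\{i/j : 0 \le i \le n(G),\ 1 \le j \le n(G)\} \subseteq [0, n(G)]$, and analogously the roots of $D_{\R}^-(G;X)$ are the rationals $\{-i/j : 0 \le i \le n(G),\ 1 \le j \le n(G)\} \subseteq [-n(G), 0]$. In particular all roots are real.

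Finally, for density, I would use that graphs of arbitrarily large order exist: letting $n(G) \to \infty$ through the class of finite graphs, the union of the root sets of $D_{\R}^+(G;X)$ exhausts $\Q_{\ge 0}$, which is dense in $[0,\infty)$, and symmetrically the union for $D_{\R}^-(G;X)$ exhausts $\Q_{\le 0}$, dense in $(-\infty,0]$. The main potential subtlety is not a mathematical obstacle but a reading of the definitions: one must be sure the inductive formation rules in Section \ref{se:sim} permit both the degree of the polynomial and the integer coefficients to depend on the similarity invariant $n(G)$, which they do since the scheme closes under multiplication and admits $X^{g(G)}$–style monomials driven by similarity functions. With that in hand the construction is immediate.
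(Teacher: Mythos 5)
Your construction is correct, and it proves the lemma, but it is organized differently from the paper's proof, so a comparison is worth recording. The paper takes the fixed quadratic
$D_{\R}^+(G;X)=(k(G)X-|V(G)|)\cdot(|V(G)|X-k(G))$,
so each graph contributes only the two roots $|V(G)|/k(G)$ and $k(G)/|V(G)|$, and density in $[0,\infty)$ comes from letting the \emph{pair} $(|V(G)|,k(G))$ range over all admissible values $1\le k\le v$ (every nonnegative rational arises either as $v/k$ or as $k/v$); this implicitly uses the easy realizability fact that every such pair is attained by some graph. You instead use a single invariant $n(G)$ but let the degree grow, so that one graph already contributes the whole grid $\{i/j: 0\le i\le n(G),\ 1\le j\le n(G)\}$ and density follows merely from the existence of graphs of every order. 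Your version is arguably more self-contained (only one invariant, no realizability of pairs needed), at the cost of a polynomial of degree $n(G)(n(G)+1)$ rather than degree $2$. The one subtlety you correctly flag — whether a product with $n(G)$-many factors is a legitimate similarity polynomial — is not an obstacle: the paper's actual definition only requires a $\Z[X]$-valued graph parameter that is constant on similarity classes, which your product is, and the paper itself uses variable-length products of the same kind (e.g.\ $P_3(G;X)=\prod_{i=0}^{s(G)}(X-i)^{h_i(G)+1}$ in Lemma \ref{le:realroots}). So the proposal stands as a valid alternative proof.
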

\begin{proof}
Put
$$D_{\R}^+(G;X)  = (k(G) X - |V(G)|) \cdot (|V(G)| X - k(G))$$
and
$$D_{\R}^-(G;X)  = (k(G) X + |V(G)|) \cdot (|V(G)| X + k(G))$$
As $D_{\R}^+(G;X)$ only depends on $|V(G)|$ and $k(G)$
it is a similarity polynomial.
The roots of $D_{\R}^+(G;X)$ are of the form $\frac{|V(G)|}{k(G)}$ or $\frac{k(G)}{|V(G)|}$.
The only limitation for these values is given by $k(G)\leq |V(G)|$ and $k(G) \geq 0$.
So the roots form a dense subset of the rational numbers.
The argument for
$D_{\R}^-(G;X)$ is basically the same.
\end{proof}

The following is straightforward.
\begin{lemma}
\label{le:similar}
Let $G=(V(G),E(G))$ be a graph with $k(G)$ connected components. Then
\begin{enumerate}
\item
$|E(G)| \leq {{|V(G)|- k(G) +1}\choose{2}}$
\item
$|V(G)| - |E(G)| \leq k(G) \leq |V(G)|$
\end{enumerate}
Conversely, if three non-negative integers $v,e,k$ satisfy
\begin{enumerate}
\item
$e \leq {{v- k +1}\choose{2}}$
\item
$v- e \leq k \leq v$
\end{enumerate}
then there exists a graph $G=(V(G),E(G))$ such that
$v= |V(G)|$
$e= |E(G)|$ and $k=k(G)$.
\end{lemma}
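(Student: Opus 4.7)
The plan is to handle the two directions separately, each splitting into the trivial bound, the spanning-forest bound, and the complete-graph bound.

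For the \emph{forward direction}, I would first dispose of the easy inequalities in (ii). The bound $k(G) \leq |V(G)|$ is immediate since each connected component contributes at least one vertex. The bound $|V(G)| - |E(G)| \leq k(G)$ is the standard spanning-forest inequality: fix a spanning forest $F$ of $G$; then $F$ has the same vertex set and the same number of connected components as $G$, and since $F$ is acyclic we get $|E(F)| = |V(G)| - k(G)$, hence $|E(G)| \geq |E(F)| = |V(G)| - k(G)$.

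For inequality (i), the strategy is to argue that among all graphs on $v$ vertices with exactly $k$ components, the edge count is maximized by a disjoint union of $K_{v-k+1}$ and $k-1$ isolated vertices. I would prove this by a shifting argument: if two components $C_1, C_2$ both have at least two vertices, then replacing them by $K_{|V(C_1)| + |V(C_2)|}$ and retaining connectedness of the rest strictly increases the edge count without changing $v$ or $k$. Iterating reduces to the extremal configuration, which has exactly $\binom{v-k+1}{2}$ edges; this gives (i).

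For the \emph{converse direction}, given $v, e, k$ satisfying both conditions I would explicitly construct a witness graph. Take a vertex set of size $v$, set aside $k - 1$ vertices to become isolated, and on the remaining $v - k + 1$ vertices build a connected graph $H$ with exactly $e$ edges. The existence of such $H$ needs
\[
(v - k + 1) - 1 \;\leq\; e \;\leq\; \binom{v - k + 1}{2},
\]
whose lower bound is precisely condition (ii) rewritten (using $k \leq v$ so that $v - k + 1 \geq 1$), and whose upper bound is condition (i). To realize $H$, start from any spanning tree on the $v - k + 1$ vertices (contributing $v - k$ edges) and add $e - (v - k)$ additional edges greedily among the remaining $\binom{v-k+1}{2} - (v - k)$ non-tree pairs; the upper bound on $e$ guarantees enough non-edges are available. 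The disjoint union with the $k - 1$ isolated vertices then has exactly $v$ vertices, $e$ edges, and $k$ components.

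The main obstacle is essentially the extremal argument for (i); once that is in place the rest is a direct combinatorial construction. I would keep the shifting argument informal, since it is standard, and devote the bulk of the write-up to verifying that the converse construction does not hit degenerate cases (e.g.\ $v = 0$, or $k = v$ forcing the edgeless graph, both of which are easily checked).
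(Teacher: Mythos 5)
The paper offers no proof of this lemma at all (it is introduced only with ``The following is straightforward''), so there is nothing of the authors' to compare against; your write-up supplies the missing argument, and its overall structure --- the spanning-forest bound for (ii), an extremal bound for (i), and an explicit witness construction for the converse --- is the natural one and is essentially correct. The converse direction in particular is sound: the rewritten lower bound $v-k\le e$ is exactly what guarantees a connected graph on $v-k+1$ vertices with $e$ edges, your tree-plus-extra-edges construction realizes it, and the degenerate cases you flag ($k=v$, $v=0$) do check out.

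One step is mis-stated, however. In the shifting argument for (i), replacing two components $C_1,C_2$ by the single complete graph $K_{|V(C_1)|+|V(C_2)|}$ does change $k$: it merges two components into one, so the iteration leaves the class of graphs with exactly $k$ components over which you are extremizing, and the claim ``without changing $v$ or $k$'' is false as written. The standard repair is to bound $|E(G)|\le\sum_{i}\binom{n_i}{2}$, where $n_1,\dots,n_k$ are the component sizes, and then use $\binom{a}{2}+\binom{b}{2}\le\binom{a+b-1}{2}+\binom{1}{2}$ for $a,b\ge1$ (equivalently, shift one vertex at a time from a smaller component to a larger one, completing both); iterating reduces to the configuration $K_{v-k+1}$ together with $k-1$ isolated vertices while keeping both $v$ and $k$ fixed. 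With that one-line correction your proof is complete.
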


\begin{lemma}
\label{le:dense-C}
There exist  
univariate similarity polynomials 
$D_{\C}^i(G;X), i=1,2,3,4$
of degree $12$
such that all the roots of
$D_{\C}^i(G;X)$ 
are dense in  the $i$th quadrant of $\C$.
\end{lemma}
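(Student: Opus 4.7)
My plan is to mirror the construction in Lemma \ref{le:dense-R}, replacing its linear factors by quadratic factors whose roots are complex conjugate pairs placed in the chosen quadrant (and its reflection). Every integer quadratic with negative discriminant has complex conjugate roots; to ensure density in a quadrant $Q_i$, I would build the polynomial as a product of six such quadratics, one per pair of control ratios, yielding the required degree $12$. Since similarity polynomials have real coefficients, the roots in $Q_i$ are accompanied by their conjugates; the six quadratics give six roots in $Q_i$ whose union over all graphs $G$ will be dense.

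The building block: for positive rationals $\alpha = p/q$ and $\beta = r/s$, the integer quadratic
$$
q^2 s^2 X^2 \;-\; 2 p q s^2 X \;+\; (p^2 s^2 + q^2 r^2)
$$
has roots $\alpha \pm i\beta$. Replacing $p, q, r, s$ by integer polynomials in the similarity parameters $n(G), m(G), k(G)$ turns this into a similarity polynomial whose roots realize the pairs $(\alpha, \beta)$ across a range of values as $G$ varies. By Lemma \ref{le:similar}, each of the ratios $k/n,\ n/k,\ m/n^2,\ n^2/m,\ m/(nk),\ (nk)/m$ ranges through a dense subset of a specified interval in $(0, \infty)$, and for $n$ sufficiently large enough slack remains in the remaining parameters that, after approximately prescribing one ratio, a second ratio can be approximately prescribed independently.

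For $Q_1$ I take $D_\C^1(G;X)$ to be the product of six such quadratics, chosen so that their six achievable $(\alpha, \beta)$-regions jointly cover $(0,\infty)^2$ densely: one pair handles $\{\alpha \le 1,\ \beta \le 1\}$ (using, e.g., $k/n$ and $m/n^2$), another handles $\{\alpha \ge 1,\ \beta \le 1\}$ (using $n/k$ and $m/n^2$), and the remaining pairs handle $\{\alpha \le 1,\ \beta \ge 1\}$ and $\{\alpha \ge 1,\ \beta \ge 1\}$ using inverse ratios. For $D_\C^2, D_\C^3, D_\C^4$ one applies the substitution $X \mapsto -X$ and/or flips the sign of the linear coefficient of each quadratic to move the roots into the respective quadrant.

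The main obstacle is the density verification. Because the parameters $n, m, k$ are coupled by the inequalities of Lemma \ref{le:similar}, any single quadratic factor produces roots in a proper (curved) subregion of $Q_1$, not the entire open quadrant; covering all of $Q_1$ requires a deliberate choice of six factors whose achievable regions overlap appropriately. The delicate cases are the asymptotic corners of $Q_1$ — very small or very large real or imaginary parts — where only factors with a particular scaling of coefficients in $n, m, k$ produce roots in the relevant regime, and one must verify via Lemma \ref{le:similar} that the required integer triples actually arise from graphs.
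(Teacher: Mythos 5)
Your overall architecture matches the paper's: each $D_{\C}^i(G;X)$ is a product of six integer quadratics, each contributing a conjugate pair of roots with the non-real root steered into the target quadrant, giving total degree $12$. But the step you yourself flag as ``the main obstacle'' --- verifying that the roots actually accumulate everywhere in the quadrant despite the coupling of $n(G)$, $m(G)$, $k(G)$ imposed by Lemma \ref{le:similar} --- is precisely the content of the lemma, and your proposal does not close it. Moreover, the concrete instantiations you suggest fail: the pair $(\alpha,\beta)=(k/n,\,m/n^2)$ is confined to the region $\beta\leq\frac{(n-k+1)(n-k)}{2n^2}\approx\frac{(1-\alpha)^2}{2}$, so it cannot cover $\{\alpha\le 1,\ \beta\le 1\}$ anywhere near $\alpha=\beta=1$; dually, $n^2/m\ge 2$ for all graphs, so the pair using $n^2/m$ misses $1\le\beta<2$ entirely. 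Since every ratio of (polynomials in) $n,m,k$ obeys such constraints, ``choosing six factors whose achievable regions jointly cover $(0,\infty)^2$'' is not a routine patch; as written the proof is incomplete.

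The paper sidesteps this difficulty with a homogeneity trick that your four-parameter building block discards. It uses the single quadratic $D^1(X,a,b,c)=c^2X^2-2acX+(a^2+b^2)$, whose roots $\frac{a\pm bi}{c}$ depend only on the ratios $a:b:c$ and are dense in the first quadrant as $(a,b,c)$ ranges over triples of distinct positive integers. Given a target triple, one rescales it by a large integer $j$ --- which does not move the roots --- until the rescaled triple satisfies the inequalities of Lemma \ref{le:similar} with the largest entry playing the role of $|E(G)|$ and the smallest that of $k(G)$, and hence is realized by an actual graph. Since one does not know in advance which of the six assignments of $(a,b,c)$ to $(|V(G)|,|E(G)|,k(G))$ is the realizable one, $D_{\C}^1(G;X)$ is taken to be the product over all six permutations; that is where the degree $12$ comes from, not from a partition of the quadrant into six subregions. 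If you want to rescue your approach, specialize your quadratic to a common denominator $q=s$: it then collapses to the paper's $D^1(X,a,b,c)$, the projective scaling freedom reappears, and the density verification reduces to the realizability statement of Lemma \ref{le:similar}.
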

\begin{proof}
We prove the lemma for the first quadrant of $\C$, the other cases being essentially the same.
We first look at the polynomial
$$
D^1(X)= D^1(X,a,b,c,a',b')= (X - \frac{a +b i}{c}) \cdot (X - (a' + b'i))
$$
with $a,b,c,a',b' \in \N$.
We find that for
$a' = \frac{a}{c}$ and $b' = \frac{-b}{c}$   we get
the quadratic polynomial
\begin{gather}
D^1(X,a,b,c)= 
c^2X^2 
- 2acX
+(a^2 +b^2) 
\tag{*}
\end{gather}
with $D^1(X) \in \Z[X]$ and for all positive integers $a,b,c$ the complex numbers
$\frac{a+bi}{c}$ roots roots in the first quadrant of $\C$.
These roots are dense in the first quadrant, even if we assume that the numbers $a,b,c$ are
distinct.
Furthermore, the polynomial remains the same if we replace $(a,b,c)$ by some multiple $(ja, jb, jc)$.
In other words
\begin{gather}
D^1(X,a,b,c) =
D^1(X,ja,jb,jc)
\tag{**}
\end{gather}

We now want to convert $D^1(X,a,b,c)$ into a similarity polynomial by assigning
$|V(G)|$, $|E(G)|$ or $k(G)$ to the parameters $a,b,c$.

Let 
$$\Pi= \{\pi : \pi: \{a,b,c\} \rightarrow \{|V(G)|, |E(G)|, k(G)\} \}$$
and put
\begin{gather}
D_{\C}^1(G;X) = 
\prod_{\pi \in \Pi}  D^1(X, \pi(a), \pi(b), \pi(c))
=
\notag \\
= \prod_{\pi \in \Pi} 
\pi(c)^2X^2 
- 2\pi(a) \pi(c)X
+(\pi(a)^2 +\pi(b)^2) 
\tag{***}
\end{gather}
Clearly, this is a similarity polynomial which is  $\SOL$-definable with roots
$$
\frac{\pi(a) + \pi(b) i}{c}
$$.

By Lemma \ref{le:similar}
We have the following constraints:
\begin{enumerate}
\item
$$|E(G)| \leq {{|V(G)|- k(G) +1}\choose{2}} = \frac{(|V(G)|- k(G) +1)(|V(G)|- k(G))}{2}$$
\item
$$|V(G)| - |E(G)| \leq k(G) \leq |V(G)|$$
\end{enumerate}

We have to show that for every three distinct integers $a,b,c$ there is 
a graph $G$ and $\pi \in \Pi$ such that
$V= |V(G)| =\pi(a), E=|E(G)| = \pi(b)$ and $k=k(G) = \pi(c)$.

Let $\pi_0$ by such that 
$\pi_0(\max\{a,b,c\}) = E$
and
$\pi_0(\min\{a,b,c\}) = k$.
This satisfies constraint (ii).

If there is no graph $G$ for which the constraint (i) is satisfied,
we put
$j= 2E$ in (**) and get a  triple, $E',k',V'$ 
with $E' = E^2, k' = E \cdot k$ and $V' = E\cdot V$,
which satisfies (i). 
To see this note that (i) becomes
\begin{gather}
j2E =j^2 \leq j^2(V-k+\frac{1}{j})(V-k) \notag 
\end{gather}
hence
\begin{gather}
1 = \leq j^2(V-k+\frac{1}{j})(V-k) \notag 
\end{gather}
which is true for $V > k$.
But $V >k$ since $V \geq k$ by (ii), and we have assumed that $V,E, k$ are all distinct.

So we can find a graph
$G'$ with $|E(G')| = E' , k(G') = k'$ and $|V(G')| = V'$.

But by (**) we have not changed the polynomial $D^1(X)$.
\end{proof}

\begin{theorem}
\label{th:main3}
For every univariate graph polynomial $P(G;X)$ there is a univariate graph polynomial $Q(G;X)$
which is prefactor equivalent to $P(G;X)$ and the roots of $Q(G;X)$ are dense in $\C$.
\end{theorem}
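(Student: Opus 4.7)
The plan is to multiply $P(G;X)$ by a universal similarity polynomial whose zero set, as $G$ ranges over all finite graphs, is dense in $\C$. Concretely, I would define
\[
D_\C(G;X) \;=\; \prod_{i=1}^{4} D_\C^{i}(G;X),
\]
where the $D_\C^{i}(G;X)$ are the similarity polynomials constructed in Lemma \ref{le:dense-C}. Since similarity is closed under products, $D_\C(G;X)$ is itself a similarity polynomial. I then set
\[
Q(G;X) \;=\; D_\C(G;X) \cdot P(G;X).
\]

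The first step is to verify prefactor equivalence of $P$ and $Q$. The identity $Q(G;X) = D_\C(G;X) \cdot P(G;X)$ has exactly the form of a prefactor reduction, with prefactor $f(G;X) = D_\C(G;X)$ and trivial substitution $g_1(G;X) = X$, so $Q \preceq_{prefactor} P$. For the converse direction, I would use that the class of similarity functions considered in Section \ref{se:sim} is (explicitly allowed to be) closed under reciprocals, so $1/D_\C(G;X)$ is itself a similarity function, and the identity
\[
P(G;X) \;=\; \frac{1}{D_\C(G;X)} \cdot Q(G;X)
\]
gives $P \preceq_{prefactor} Q$. Together these yield $P \sim_{prefactor} Q$.

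The second step is density. The set of complex roots of $Q(G;X)$, as $G$ varies over all finite graphs, is the union of the roots of $P(G;X)$ and the roots of $D_\C(G;X)$. By Lemma \ref{le:dense-C}, the roots of $D_\C^{i}(G;X)$ form a dense subset of the $i$-th quadrant of $\C$, and since $D_\C^{i}$ divides $D_\C$ for each $i$, the roots of $D_\C$ are dense in the union of all four quadrants, hence in $\C$. Thus the roots of $Q(G;X)$ are dense in $\C$.

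The main obstacle is really confined to Lemma \ref{le:dense-C}, which does the combinatorial work of realizing arbitrary triples $(V,E,k)$ as $(|V(G)|,|E(G)|,k(G))$ for actual graphs $G$; once that is in hand, the step from Lemma \ref{le:dense-C} to Theorem \ref{th:main3} is essentially a single multiplication. A minor bookkeeping point to address cleanly is that prefactor reduction in the reverse direction requires the function-space $\bF$ to be closed under reciprocals, but this is already built into the framework of Section \ref{se:sim}, so no additional construction is needed.
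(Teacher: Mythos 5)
Your proposal is correct and follows exactly the paper's proof: multiply $P(G;X)$ by the product $\prod_{i=1}^{4}D_{\C}^{i}(G;X)$ of the similarity polynomials from Lemma \ref{le:dense-C} and observe that the resulting polynomial is prefactor equivalent to $P$ while inheriting a dense set of roots from the prefactor. You have merely spelled out the two routine verifications (the reverse prefactor reduction via closure under reciprocals, and the density of the union of the quadrant-dense root sets) that the paper leaves implicit.
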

\begin{proof}
We use Lemma \ref{le:dense-C} and put
$$
Q(G;X) =  \left(\prod_{i=1}^{i=4} D^i(G;X)\right) \cdot P(G;X).
$$
\end{proof}
\begin{remark}
Note that the polynomials $D^i(G;X)$ are independent of $P(G;X)$
and are easily seen to be $\SOL$-definable. Therefore $\prod_{i=1}^{i=4} D^i(G;X)$ is also $\SOL$-definable,
cf. \cite{ar:KotekMakowskyZilber11}.
\end{remark}

\begin{theorem}
\label{th:main3a}
For every univariate graph polynomial 
$$
P(G;X) = \sum_{i=0}^{i= s(G)} h_i(G) X_i
$$ 
with $s(G)$ a similarity function
there is a univariate graph polynomial $Q(G;X)$
which is d.p.-euqivalent to $P(G;X)$ and the roots of $Q(G;X)$ are all real and dense in $\R$.
\end{theorem}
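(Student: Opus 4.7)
The plan is to combine Theorem \ref{th:main2} with Lemma \ref{le:dense-R}. By Theorem \ref{th:main2}, there is a univariate graph polynomial $P_3(G;X)$, d.p.-equivalent to $P(G;X)$, all of whose roots are non-negative integers; explicitly, $P_3(G;X) = \prod_{i=0}^{s(G)} (X-i)^{h_i(G)+1}$ (after first applying Lemma \ref{le:mixed} to reduce to non-negative coefficients). Independently, Lemma \ref{le:dense-R} supplies similarity polynomials $D_{\R}^+(G;X)$ and $D_{\R}^-(G;X)$ whose roots, as $G$ varies over all finite graphs, form dense subsets of $[0,\infty)$ and $(-\infty,0]$ respectively. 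The natural candidate is therefore
$$Q(G;X) \;=\; D_{\R}^+(G;X) \cdot D_{\R}^-(G;X) \cdot P_3(G;X).$$

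Three points must be verified. First, every root of $Q(G;X)$ is real: the roots of $P_3(G;X)$ are integers, and the roots of $D_{\R}^\pm(G;X)$ are ratios of $|V(G)|$ and $k(G)$, hence real. Second, the union of the root sets of $Q(G;X)$ as $G$ ranges over $\mathcal{G}$ is dense in $\R$, because already $D_{\R}^+(G;X)$ contributes a dense subset of $[0,\infty)$ and $D_{\R}^-(G;X)$ a dense subset of $(-\infty,0]$; adjoining further real roots from $P_3(G;X)$ cannot destroy density. Third, $Q(G;X) \sim_{d.p.} P(G;X)$.

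For the d.p.-equivalence, by transitivity and Theorem \ref{th:main2} it suffices to show that $Q(G;X) \sim_{d.p.} P_3(G;X)$. This is a direct application of Proposition \ref{prop:equiv}: the coefficient sequence of the similarity polynomial $D_{\R}^+ \cdot D_{\R}^-$ is a function of $(n(G), m(G), k(G))$ alone, so the coefficient sequence of $Q(G;X)$ is computable from $(n(G), m(G), k(G))$ together with that of $P_3(G;X)$ by polynomial multiplication, and conversely the coefficients of $P_3(G;X)$ can be recovered from those of $Q(G;X)$ by polynomial division by the (nonzero) divisor $D_{\R}^+(G;X) \cdot D_{\R}^-(G;X)$, which is itself reconstructible from $(n(G), m(G), k(G))$. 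The required functions $F_1, F_2$ of Proposition \ref{prop:equiv} are thus explicit.

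The main conceptual obstacle is recognising that the density statement cannot be upgraded to prefactor equivalence with $P(G;X)$: the passage from $P$ to $P_3$ already breaks prefactor reducibility (the degree and the very notion of ``prefactor'' are incompatible with the coefficient-to-root transformation of Lemma \ref{le:realroots}), so d.p.-equivalence is the correct and tight notion here. Once this is acknowledged, the argument reduces to the observation that \emph{any} similarity polynomial is invisible to d.p.-equivalence on similar graphs, which makes the enrichment of the root set by $D_{\R}^\pm$ essentially free.
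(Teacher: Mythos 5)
Your proposal is correct and follows essentially the same route as the paper: take $P_3(G;X)$ from Lemma \ref{le:realroots} (via Theorem \ref{th:main2}) and multiply by the dense-root similarity polynomials of Lemma \ref{le:dense-R}, with d.p.-equivalence preserved because a similarity prefactor is a function of $(n(G),m(G),k(G))$ alone and can be divided out again. In fact your version is slightly more careful than the paper's printed proof, which multiplies only by $D_{\R}^+(G;X)$ and therefore literally yields roots dense only in $[0,\infty)$; your inclusion of the factor $D_{\R}^-(G;X)$ is exactly what is needed to get density in all of $\R$.
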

\begin{proof}
We combine Theorem \ref{th:main2} with the
Lemmas \ref{le:dense-R} and \ref{le:realroots} where $P_3(G;X)$ is d.p.-equivalent to $P(G;X)$
and put
$$
Q(G;X)= D_{\R}^+(G;X) \cdot P_3(G;X)
$$
\end{proof}

\subsection{Bounding complex roots in a disk}
To get a similar theorem bounding the complex roots in a disk we use
Rouch\'e's Theorem, cf.  \cite[Section 4.10, Theorem 4.10c]{bk:Henrici-vol1}.

\begin{theorem}[Rouch\'e's Theorem]
\label{th:rouche}
Let 
$P(X) = \sum_{i=0}^{d} h_i X^i$
be a polynomial and
\begin{gather}
R = 1 + \frac{1}{|h_d|} \cdot \max_i \{| h_i |: 0 \leq i \leq d-1\}
\notag
\end{gather}
Then all complex roots $\xi$ of $P(X)$ satisfy $|\xi| \leq R$.
\end{theorem}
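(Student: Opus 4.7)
The plan is to apply Rouché's theorem directly, using the standard decomposition $P(X) = f(X) + g(X)$ where $f(X) = h_d X^d$ is the leading monomial and $g(X) = \sum_{i=0}^{d-1} h_i X^i$ collects the lower-order terms. The goal is to verify the Rouché hypothesis $|g(z)| < |f(z)|$ on the circle $\{z \in \C : |z| = R\}$; once this is established, $f$ and $f + g = P$ will have the same number of zeros inside this disk, and since $f$ has exactly $d$ zeros there (all at the origin) and $P$ has degree $d$, every root of $P$ must lie strictly inside.

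The key estimate is the following. Write $M = \max\{|h_i| : 0 \leq i \leq d-1\}$, so that $R = 1 + M/|h_d|$. For $|z| = R$, the triangle inequality and the geometric series give
$$
|g(z)| \leq \sum_{i=0}^{d-1} |h_i|\, R^i \leq M \sum_{i=0}^{d-1} R^i = M \cdot \frac{R^d - 1}{R - 1}.
$$
The choice of $R$ was rigged precisely so that $R - 1 = M/|h_d|$, hence $M/(R - 1) = |h_d|$ and
$$
|g(z)| \leq |h_d|\,(R^d - 1) < |h_d|\, R^d = |f(z)|.
$$
Rouché's theorem now applies on the closed disk of radius $R$, yielding that all $d$ roots $\xi$ of $P$ satisfy $|\xi| < R$, which is in fact slightly stronger than the stated $|\xi| \leq R$.

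The only trivial cases to dispose of are $M = 0$, where $P(X) = h_d X^d$ has only the root $0$ (and $R = 1$, so the bound holds), and the observation that $R \geq 1$ always, which is what justifies the geometric-series bound in the form above. Beyond these bookkeeping checks, there is no real obstacle: the content of the theorem is entirely carried by the clever choice of $R$ that makes $M/(R-1)$ collapse to $|h_d|$.
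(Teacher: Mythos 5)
Your argument is correct, but note that the paper does not actually prove this statement at all: it is quoted from Henrici (Section 4.10, Theorem 4.10c) as a known bound on polynomial roots (it is essentially the classical Cauchy bound, which Henrici derives via the function-theoretic Rouch\'e theorem --- presumably the reason the authors attach that name to it). Your self-contained derivation is the standard one and is sound: splitting $P=f+g$ with $f(X)=h_dX^d$, bounding $|g(z)|\le M\frac{R^d-1}{R-1}=|h_d|(R^d-1)<|h_d|R^d=|f(z)|$ on $|z|=R$ using precisely that $R-1=M/|h_d|$, and invoking Rouch\'e to place all $d$ roots strictly inside the disk. You correctly isolate the degenerate case $M=0$ (where $P=h_dX^d$) and the need for $M>0$ to make the geometric-series quotient legitimate; the hypothesis $h_d\neq 0$ is implicit in the statement. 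You even obtain the slightly stronger strict inequality $|\xi|<R$. One remark for comparison: the same estimate shows directly that $|h_d z^d|>\sum_{i=0}^{d-1}|h_i|\,|z|^i$ whenever $|z|>R$, hence $P(z)\neq 0$ there by the triangle inequality alone; this yields the bound without appealing to any complex-analytic machinery, which is arguably preferable in a combinatorics paper, but your route is equally valid.
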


We shall use Rouch\'e's Theorem in the form given by the following corollary:
\begin{corollary}
\label{cor:rouche}
Let
$P(X) = \sum_{i=0}^{d} h_i X^i$ be a polynomial with integer coefficients and $h_d \geq 1$,
and let
$g(X) = A \cdot X$
with $A \geq \max_i \{| h_i |: 0 \leq i \leq d-1\}$. Define
$$
P'(X) = P(A\cdot X) = \sum_{i=0}^{d} h_i A^i X^i = \sum_{i=0}^{d} H_i X^i
$$ 
with $H_i = A^i h_i$.
Then all complex roots $\xi$ of $P'(X)$ satisfy $|\xi| \leq 2$.
\end{corollary}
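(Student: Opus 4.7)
The plan is to apply Rouché's Theorem (Theorem \ref{th:rouche}) directly to the rescaled polynomial $P'(X)$ and track how the bound $R$ transforms under the substitution $X \mapsto A \cdot X$. The point is that after rescaling, the leading coefficient grows by a factor $A^d$ while each lower coefficient grows only by $A^i$ with $i \leq d-1$, and the hypothesis $A \geq \max\{|h_i| : 0 \leq i \leq d-1\}$ is exactly what we need to compensate for the remaining factor of $|h_i|$ so that the ratio in Rouché's formula is at most $1$.

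Concretely, first I would identify the coefficients of $P'(X)$: by definition $H_i = A^i h_i$, so in particular $|H_d| = A^d |h_d| \geq A^d$ since $h_d \geq 1$. Next, for each $i$ with $0 \leq i \leq d-1$, I would use the hypothesis on $A$ to bound
\begin{equation*}
|H_i| = A^i |h_i| \leq A^i \cdot A = A^{i+1} \leq A^d,
\end{equation*}
where the last inequality uses $i+1 \leq d$ (and $A \geq 1$, which we may assume without loss of generality, since if all $|h_i| = 0$ for $i < d$ then $P'(X) = h_d A^d X^d$ has only $0$ as a root). Hence $\max_{0 \leq i \leq d-1} |H_i| \leq A^d \leq |H_d|$.

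Plugging these estimates into the $R$ from Theorem \ref{th:rouche} applied to $P'(X)$, I obtain
\begin{equation*}
R \;=\; 1 + \frac{1}{|H_d|} \cdot \max_{0 \leq i \leq d-1} |H_i| \;\leq\; 1 + \frac{A^d}{A^d} \;=\; 2,
\end{equation*}
so every complex root $\xi$ of $P'(X)$ satisfies $|\xi| \leq R \leq 2$, as required.

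There is no real obstacle here; the argument is a one-line bookkeeping calculation. The only small subtlety is handling degenerate cases (e.g.\ when all lower coefficients vanish, or $A = 0$), which can be dispatched by noting that in those cases either the hypothesis forces a trivial polynomial $P'(X) = h_d A^d X^d$ whose only root is $0$, or we can harmlessly replace $A$ by $\max(A,1)$ without affecting the conclusion of the corollary, since the bound being proved is a weakening ($|\xi| \leq 2$ rather than the sharper $R$).
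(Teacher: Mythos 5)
Your proof is correct and follows essentially the same route as the paper's: apply Theorem \ref{th:rouche} to $P'(X)$ and verify $\max_{0\leq i\leq d-1}|H_i| = \max_i A^i|h_i| \leq A^{i+1}\leq A^d \leq |H_d|$, so that $R\leq 2$. If anything, your write-up states the key inequality chain more cleanly than the paper's (whose displayed inequalities are somewhat garbled), and your handling of the degenerate case where all lower coefficients vanish matches the paper's.
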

\begin{proof}
If all $h_i: i=0, \ldots, d-1$ vanish, $P(X)= h_d X^d$ and $0$ is the only root and has multiplicity $d$.
Therefore, without loss of generality,
we can assume that $A \geq 1$, because all $h_i$ are integers.
We have to show that the coefficients $H_i$ satisfy the hypotheses of Theorem \ref{th:rouche} with $R=2$.
$$
2= 1+ \frac{1}{ A^d \cdot |h_d|} \cdot \max_i \{ A^i \cdot | h_i |: 0 \leq i \leq d-1\}
$$
it suffices to show that for $i \leq d-1$ we have
$$
\frac{A^i \cdot h_i}{A^d \cdot h_d} \leq A^d \cdot |h_d|
$$
If $h_i=0$ this is true.
If $h_i \neq 0$ we have 
$$
\frac{A^i \cdot h_i}{A^d \cdot h_d} \leq 
\frac{A^{i+1}}{A^d \cdot h_d} \leq 
A^d \cdot |h_d|
$$
because
$$
\frac{A^{i+1}}{A^d \cdot h_d} \leq  1
$$
and
$$
1 \leq A^d \cdot |h_d|
$$
\end{proof}

\begin{theorem}
\label{th:main4}
Let $P(G;X) = \sum_{i=0}^{d(G)} h_i(G) X^i$ be a univariate graph polynomial with integer coefficients,
and such that $|h_i(G)| \leq |V(G)|^r$ for some fixed $r \in \N$.
Then there exists a univariate graph polynomial
$P_1(G;X)$ which is 
substitution equivalent
to $P(G;X)$ such that
all complex roots $\xi$ of $P_1(G;X)$ satisfy $|\xi| \leq 2$.
\end{theorem}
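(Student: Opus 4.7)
The plan is to rescale the variable by a graph-dependent similarity function that uniformly dominates every coefficient of $P(G;X)$, so that Corollary~\ref{cor:rouche} can be applied graph-by-graph. First I would set
$$
A(G) \;:=\; |V(G)|^{r} + 1,
$$
which depends only on $n(G)$ and is therefore a similarity polynomial; crucially, $A(G)\geq 1$ for every graph $G$, and, by hypothesis, $|h_i(G)|\leq |V(G)|^{r} < A(G)$ for every $i$. The candidate polynomial is then
$$
P_1(G;X) \;:=\; P\bigl(G;\, A(G)\cdot X\bigr) \;=\; \sum_{i=0}^{d(G)} h_i(G)\, A(G)^{i}\, X^{i}.
$$

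Next I would verify substitution equivalence. The passage from $P$ to $P_1$ is the substitution $X \mapsto A(G)\cdot X$, and the reverse passage is $X \mapsto A(G)^{-1}\cdot X$; both are similarity functions (the reciprocal is well-defined since $A(G)\geq 1$). Working in a function space $\mathbf{F}$ closed under reciprocals, as explicitly permitted in Section~\ref{se:sim}, this yields $P_1 \sim_{subst} P$.

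Finally I would invoke Corollary~\ref{cor:rouche} with the constant $A := A(G)$. The hypotheses to check are: integer coefficients (immediate); a leading coefficient of absolute value at least $1$ (the leading coefficient $h_{d(G)}(G)$ is a nonzero integer, and if it happens to be negative we simply pass to $-P_1$, which has the same roots and is still substitution equivalent to $P$); and $A \geq \max_{0\leq i<d(G)}|h_i(G)|$, which follows at once from $A(G) > |V(G)|^{r} \geq |h_i(G)|$. The corollary then gives $|\xi|\leq 2$ for every complex root $\xi$ of $P_1(G;X)$. The one point requiring mild care is the bookkeeping around the class $\mathbf{F}$ being closed under reciprocals, a convention already built into the framework of Section~\ref{se:sim} rather than a genuine obstacle; the proof is, in essence, one application of Rouch\'e's theorem with an appropriately large polynomial scaling factor.
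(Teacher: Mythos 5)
Your proposal is correct and follows essentially the same route as the paper: the paper's proof substitutes $g(G;X)=n(G)^{r}\cdot X$ into $P$ and invokes Corollary~\ref{cor:rouche}, exactly as you do (your choice of $|V(G)|^{r}+1$ in place of $|V(G)|^{r}$ and your remark on the sign of the leading coefficient are harmless refinements, the latter being moot since $P_1$ and $-P_1$ have the same roots).
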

\begin{proof}
We use $g(G;X) = n(G)^r \cdot X$ from Corollary \ref{cor:rouche} and substitute it for $X$.
Clearly $g(G;X)$ is a similarity polynomial, and its inverse is a rational similarity function.
\end{proof}

Combining Theorem \ref{th:main4} with Theorem \ref{th:main2} we get

\begin{corollary}
\label{cor:realbounded}
Let $s(G)$ be a similarity function.
For every univariate graph polynomial with real coefficients 
$$P(G;X) = \sum_{i=0}^{i= s(G)} h_i(G) X_i$$
there is a d.p.-equivalent graph polynomial 
$$Q(G;X) = \sum_{i=0}^{i= s(G)} H_i(G) X_i$$
with real coefficients such that all the roots $z$ of $Q(G;X)$ are real and $|z| \leq 2$.
\end{corollary}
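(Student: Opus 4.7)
The plan is a two-stage reduction: first replace $P(G;X)$ by a d.p.-equivalent polynomial with only real roots, then rescale to confine these roots to the disk $|z| \leq 2$. Stage one is a direct invocation of Theorem \ref{th:main2} (by way of Lemmas \ref{le:mixed} and \ref{le:realroots}): up to d.p.-equivalence I may assume the polynomial is
$$
P_3(G;X) = \prod_{i=0}^{s(G)} (X - i)^{h_i(G)+1},
$$
whose roots are non-negative integers, each bounded above by the similarity function $s(G)$. (For the real-coefficient version one first replaces $P$ by a d.p.-equivalent integer-coefficient polynomial using an invertible encoding analogous to Lemma \ref{le:mixed}; the construction of $P_3$ then applies unchanged.)

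Stage two rescales $X$ by a similarity function. Set $\alpha(G) = \max\bigl(s(G),\,1\bigr)/2$ and define
$$
Q(G;X) \;=\; P_3\bigl(G;\ \alpha(G)\cdot X\bigr).
$$
Since every root of $P_3(G;X)$ lies in $\{0,1,\ldots,s(G)\}$, every root of $Q(G;X)$ has the form $i/\alpha(G)$ with $0 \leq i \leq s(G)$, hence is real and satisfies $|z| \leq 2$. Because $s(G)$, and therefore $\alpha(G)$, depends only on the similarity class of $G$, the coefficient vectors of $P_3(G;X)$ and $Q(G;X)$ are related by the diagonal scaling $H_i(G) = \alpha(G)^i \cdot h^{(3)}_i(G)$, which is invertible given the similarity data $n(G),m(G),k(G)$. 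Proposition \ref{prop:equiv} then certifies that $Q(G;X) \sim_{d.p.} P_3(G;X) \sim_{d.p.} P(G;X)$.

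The main technical wrinkle is that the proof cannot simply invoke Theorem \ref{th:main4} as a black box, because that theorem requires the hypothesis $|h_i(G)| \leq |V(G)|^r$ on the coefficients of the input polynomial, and the coefficients of $P_3(G;X)$ produced in stage one have no such a priori polynomial bound in $|V(G)|$. The workaround I outlined avoids Rouché's estimate altogether: the roots of $P_3$ are already known explicitly to be bounded integers, so one can choose the scaling factor $\alpha(G)$ directly in terms of the similarity function $s(G)$ rather than in terms of coefficient magnitudes, and d.p.-equivalence, not merely substitution equivalence, is all that is demanded by the conclusion of the corollary.
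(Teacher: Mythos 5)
Your proof is correct, and for the second stage it takes a genuinely different route from the paper. The paper's entire proof is the single sentence ``Combining Theorem \ref{th:main4} with Theorem \ref{th:main2} we get'' the corollary, i.e.\ it first makes all roots real via $P_3(G;X)=\prod_{i=0}^{s(G)}(X-i)^{h_i(G)+1}$ and then appeals to the Rouch\'e-based rescaling of Theorem \ref{th:main4}. You correctly identify that this black-box combination has a gap: Theorem \ref{th:main4} carries the hypothesis $|h_i(G)|\leq |V(G)|^r$, which the coefficients of $P_3$ need not satisfy (and which the corollary does not assume), so the paper's second step does not literally apply. Your replacement --- rescaling by the similarity function $\alpha(G)=\max(s(G),1)/2$, justified by the fact that the roots of $P_3$ are \emph{explicitly known} to lie in $\{0,\dots,s(G)\}$ rather than merely estimated from coefficient sizes --- bypasses Rouch\'e entirely, needs no coefficient bound, and the invertible diagonal scaling $H_i=\alpha(G)^i h^{(3)}_i$ together with Proposition \ref{prop:equiv} gives d.p.-equivalence exactly as you say. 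So your argument is not just different but actually repairs the paper's proof. The one point where you inherit rather than resolve a weakness of the paper is the ``real coefficients'' clause: the construction $P_3$ requires integer exponents $h_i(G)+1$, and your proposed fix via ``an invertible encoding analogous to Lemma \ref{le:mixed}'' is not available for arbitrary reals (there is no injection of finite real sequences into finite integer sequences of the kind that lemma provides); the honest fix is to use the variant $P_3(G;X)=\prod_i (X-h_i(G))$ (the alternative construction the authors suppressed), whose roots are the real coefficients themselves and are then bounded by a similarity function only under an additional hypothesis, or to restrict the corollary to integer coefficients as in the actual statement of Theorem \ref{th:main2}. This defect is in the paper's statement as much as in your proof, but it deserves to be flagged rather than waved through.
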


\begin{remark}
Can we replace d.p.-equivalence by prefactor equivalence in Theorem \ref{th:main2} and Corollary \ref{cor:realbounded}?
\end{remark}

\section{Conclusions and open problems}
\label{conclu}
\label{se:conclu}

We have formalized several notions of reducibility and equivalence of graph polynomials
which are implicitly used in the literature: d.p.-equivalence, prefactor equivalence and
substitution equivalence. We used these notions to discuss whether the locations
of the roots of a univariate graph polynomial $P(G;X)$ are meaningful.
We have shown that, under some weak assumptions,  there is always a d.p.-equivalent
polynomial $Q(G;X)$ such that its roots are always real and dense (or bounded) in $\R$.
We have shown that, under some weak assumptions, there is always a prefactor equivalent polynomial
$Q(G;X)$ such that its roots are dense (or bounded) in $\C$.

As our results show, d.p.-equivalence allows rather dramatic modifications of the presentation of
graph polynomials. This is also the case for prefactor equivalence, although in a less dramatic way.
In the case of d.p.equivalence we could require that the transformation of the
coefficients in Proposition \ref{prop:equiv} be restricted to transformations of low algebraic or
computational complexity, but one would like that the various representations of a graph polynomial
from Example \ref{ex:coefficients} remain equivalent.
We did not address such refinements of d.p.-equivalence in this paper, and this option for further
research.

Ultimately, we are faced with the question:
\begin{quote}
What makes a graph polynomial interesting within its 
\\
d.p.- equivalence or prefactor equivalence class?
\end{quote}

To avoid {\em unnatural} graph polynomials we might require that the coefficients
have a combinatorial interpretation.
This can be captured be requiring that the graph polynomial be definable in Second Order Logic
$\SOL$, as proposed in \cite{ar:KotekMakowskyZilber11}.
However, this is much too general and our modification theorems for the location
of the roots still apply under such a restriction.

A graph polynomial is called an {\em elimination invariant} 
if it satisfies
some recurrence relation with respect
to certain vertex and/or edge elimination operations.
In the case of the Tutte polynomial one speaks of a {\em Tutte-Grothendieck invariant}
(TG-invariant), in the case of the 
chromatic and dichromatic polynomial of a {\em chromatic invariant} (C-invariant), 
cf. \cite{bk:Bollobas98,bk:Aigner2007,pr:Ellis-MonaghanMerino2008,pr:Ellis-MonaghanMerino2008a}.
Other cases are the M-invariants for matching polynomials, and the EE-invariants
and VE-invariants from \cite{phd:Averbouch}, cf. also
\cite{ar:AverbouchGodlinMakowsky10,ar:TittmannAverbouchMakowsky10}.

Several graph polynomials $U_E(G; \bar{X})$ have been characterized as the most general
elimination invariant of a certain type $E \in \{C,TG,M, VE, EE\}$ in the following sense:
\begin{enumerate}
\item
$U_E(G; \bar{X})$ is an E-invariant;
\item
every other E-invariant is a substitution instance of $U_E(G; \bar{X})$.
\item
A well known E-invariant, say $P(G; \bar{X})$, is prefactor equivalent to $U_E(G; \bar{X})$.
\end{enumerate}
Theorems of this form are also called {\em recipe theorems}, cf. \cite{bk:Aigner2007,ar:Ellis-MonaghanSarmiento2011}. 
However, in such cases the location of the zeros of a univariate E-invariant $P(G;\bar{X})$  are still subject
to our modification theorems.

Other graph polynomials in the literature were obtained from counting weighted homomorphisms,
cf. \cite{bk:Lovasz-hom} or the recent \cite{ar:GarijoGoodallNesetril2013}, or from counting generalized
colorings, \cite{ar:delaHarpeJaeger1995,ar:KotekMakowskyZilber11}.
These frameworks are quite general and are unlikely avoid our modification theorems.
It remains a challenge to define a framework in which the location of the roots of a
graph polynomial is semantically significant.


\end{document}